\theoremstyle{plain}
\newtheorem{theorem}{Theorem}[section]
\newtheorem{definition}{Definition}[section]
\theoremstyle{definition}
\newtheorem{example}{Example}[section]
\newtheorem{remark}{Remark}[section]
\newcommand{\keywords}{\textbf{Key words. }\medskip}
\newcommand{\subjclass}{\textbf{MSC 2010. }\medskip}
\renewcommand{\abstract}{\textbf{Abstract. }\medskip}
\numberwithin{equation}{section}
\begin{document}

\title{Asymptotic behaviour of functionals of cyclical long-range dependent random fields\thanks{This work was partly supported by La Trobe University Research Grant "Stochastic Approximation in Finance and Signal Processing" and the Swedish Institute grant SI-01424/2007.}}

\author{Boris Klykavka, Andriy Olenko,  Matthew Vicendese}



\date{This is an Author's Accepted Manuscript of an article published in the
Ukrainian Mathematical Bulletin, Vol.~9, No. 2. (2012), 191--208.  and republished by Journal of Mathematical Sciences. Vol.~187, No. 1. (2012), 35--48. The final publication is available at link.springer.com. [DOI:10.1007/s10958-012-1047-1]}

\maketitle

\begin{abstract}
 Long-range dependent random fields with spectral densities which are unbounded at some frequencies are investigated.  We demonstrate new examples of covariance functions which do not exhibit  regular varying asymptotic  behaviour at infinity. However, variances of averaged functionals of these fields are regularly varying.  Limit theorems for weighted functionals of cyclical long-range dependent fields are obtained.  The order of normalizing constants and relations between the weight functions and singularities in non-degenerative asymptotics are discussed.
\end{abstract}

\subjclass{60G60, 60F17}

\keywords{Random fields, Limit theorems, Long-range dependence, Seasonal/cyclical long memory, Weighted functionals}

\section{Introduction}

For a homogeneous isotropic finite-variance random field $\xi(x),\ x\in \mathbb R^n,$ the most frequently admitted definition of long-range dependence is field's nonintegrable
covariance function
$\mathsf B_n(|x|)={\rm cov}(\xi(y+x),\xi(y)),$ i.e.
 \begin{equation}\label{long}\int_{\mathbb R^n} \mathsf B_n(|x|)dx=+\infty\,. \end{equation}
This is often wrongly thought to be due to a singularity of field's spectral density at zero.
However,  singularities of the spectral density at non-zero points also imply (\ref{long}).
In particular, models with singularities at non-zero frequencies are of great importance in time series analysis.
Many time series show cyclical/seasonal evolutions with peaks in spectral densities whose locations define periods of the cycles,
see \cite{anh,rob}.

Among the extensive literature on long-range dependence, relatively few publications are devoted to cyclical long-range dependent processes.

The case when the summands are some functionals of a long-range dependent Gaussian process is of great importance  in the theory of limit theorems for sums of dependent random variables.
It is well-known that, compared with Donsker-Prohorov results, long-range dependent summands can produce non-Gaussian limits and normalizing coefficients that are different to $n^{-1/2}.$

 Limit theorems for functionals of long-range dependent Gaussian processes were investigated by M. Rosenblatt \cite{ros1}, Dobrushin and Major \cite{dob1,dob}, Taqqu \cite{taq1,taq2}, Giraitis and Surgailis \cite{gir1,gir2}, Oppenheim and Viano  \cite{dou,ope,ope1} and others.  For multidimensional results of this type see \cite{leo1,leo2, ole2, yad}.

All  mentioned  publications were focused on the  Donsker line \;\;\;\;\;
$\int_{0}^{rt}H_k(\xi(x))dx,$ $ t\in[0,1]$ ($\sum_{m=0}^{[Nt]}H_k(\xi(m))$ for the discrete-time case), where $H_k(\cdot)$ is the $k$-th Hermite polynomial with the leading coefficient~1.

In the classical linear case $\int_{0}^{rt}\xi(x)dx,$ $H_1(x)=x,$ the limit is not affected by cyclicity at all.
However, it was shown that for non-linear functionals with $H_k(\cdot),\ k\ge 2,$  the cyclical behaviour can play a role, see \cite{dou,haye,ope}.

The aim of this paper is to investigate the limit behaviour of the weighted linear functionals
$$\int_{\mathbb{R}^n}f_{rt}(x)\,\xi(x)\,dx,\  r\to \infty, $$
where $\xi(x),$ $x\in \mathbb{R}^n,$ is a Gaussian random field, and $f_{r}(x)$ is a non-random function.

In the paper we establish a limit theorem for random fields, which is also new for the one-dimensional case of stochastic processes.  The theorem shows that
for general schemes, in contrast to the Donsker line, the cyclical effects play a role even for the linear case $H_1(x)=x.$
It is also shown that the limit is degenerated if the type of weight function does not match the type of the singularity.

The rest of the paper is organized as follows.  Section 2 gives some new examples of random fields which covariance functions do not
exhibit regular asymptotic behaviour at infinity, but which spectral densities are regularly varying at zero.
These examples motivate the use of weighted functionals for Abelian-Tauberian and limit theorems. All calculations in section 2 have been verified by {\it Maple 13,} Maplesoft.
Section 3 introduces a class of isotropic random fields with spectral singularities at non-zero frequencies.
In Section 4 we establish  a limit theorem. The case of degenerated limits is discussed in the next section.
Final remarks and and  particular cases are presented in Section 6.

In what follows we use the symbol $C$ with subscripts to denote constants which are not important for our discussion. Moreover, the same
symbol $C_i$ may be used for different constants appearing in the same proof.

\section{Motivating examples}

One of the main tools to obtain various asymptotic  results for     random processes is Abelian and Tauberian theorems. These theorems give relations of the asymptotic behaviour of covariance functions at infinity and spectral densities at zero.

Let $\mathbb{R}^n$ be a Euclidean space of dimension $n \geq 1.$ Furthermore, let $\xi(x),\,x \in \mathbb R^n $ be a real-valued
measurable mean-square continuous homogeneous isotropic Gaussian random field  with zero mean and the covariance function
 $\mathsf B_n(r) = \mathsf B_n (|x|)$ and  the spectral function $\mathbf{\Phi}(u), u\in \mathbb R^n, $ (see \cite{leo2,yad}).  If there is a function $\varphi(\lambda),\,\lambda\in[0;+\infty)$
  such that
\[\lambda^{n-1}\varphi(\lambda)\in L_1([0,\infty)),\, \mathbf{\Phi}(u)=\mathbf{\Phi}(|u|)=
\frac{2\pi^{n/2}}{\Gamma(n/2)}\int_0^{|u|} z^{n-1}\varphi(z)dz,\]
then $\varphi(\cdot)$ is called the isotropic spectral density of the field $\xi(x).$

To investigate asymptotic properties of random field numerous publications  (see, for example \cite{leo1,leo2,ole1,olkly,yad}) use variances of the functionals (instead of the covariance function $\mathsf  B_n(r)$)
\begin{equation}      \label{g001}
    \int_{v(r)}\xi(x)dx \quad \textrm{and} \quad \int_{s(r)}\xi(x)dm(x),
\end{equation}
where  $v(r):=\{x\in R^n: |x|\le r  \},$  $s(r):=\{x\in  R^n: |x|= r  \},$
$m(\cdot)$ is the Lebesgue measure on the sphere $s_{n-1}(r).$    Namely, they establish
and use  relations of asymptotic behaviour    of spectral densities $\varphi(\lambda)$   and the functions
$$
  l_n (r) ={\rm Var} \left[ \int_{s(r)} \xi(t) d m (t) \right]=
  \frac{(2 \pi)^n r^{2n}}{r^2}\int_0^{\infty}
  \frac{J^2_{\frac{n-2}{2}} (r\lambda)}{ (r\lambda)^{n-2}} d\mathbf{\Phi} (\lambda), n\geq 2,
$$
$$
   b_n (r) = { \rm Var} \left[ \int_{v(r)} \xi(t) dt  \right]=(2\pi)^n r^{2n} \int_0^{\infty}
     \frac{J^2_{\frac{n}{2}} (r\lambda)}{ (r\lambda)^{n}} d\mathbf{\Phi} (\lambda),
$$
where $J_\nu (\cdot) $ is the Bessel function of the first kind of order $\nu \ge -\frac{1}{2}.$

First of all we provide some motivation for using $l_n(r)$ and $b_n(r)$ instead of $\mathsf B_n(r).$
It is done by new examples of spectral densities, covariance functions and the functions $b_n(r), $  which
\begin{itemize}
  \item  have close form representations;
  \item  show principal differences in asymptotic behaviour of functions $b_n(r)$ and  $ \mathsf B_n(r),$ and clarify the choice of $b_n(r).$
\end{itemize}
In all examples below $n=3.$  Therefore

$$
   \mathsf B_3(r)=C{}  \int_0^{+\infty}  \frac{J_\frac{1}{2}(r \lambda)}{\sqrt{r \lambda}}{}\lambda^2{}\varphi(\lambda){}d\lambda,
$$
$$
    b_3(r)=  C_1\,r^6\int_0^{+\infty}\frac{J^2_{\frac 32}(r \lambda )}{(r \lambda )^3 }{}\lambda^2{}\varphi(\lambda){}d\lambda.
$$

\begin{example}  \label{ex1}
Let
    $$\varphi( \lambda)= \left\{
     \begin{array}{ll}
         \frac{2+\cos( \lambda)}{ \lambda}, & 0\le  \lambda \le a; \\
       0, & \;\; \textrm{otherwise}.
     \end{array}
     \right.$$
It is obvious that $ \lambda^2 \varphi( \lambda) \in L([0,+\infty]). $
Then the corresponding covariance function is
$$\mathsf B_3(r)= C \left( \frac{2\cos(ar)-2}{r^2 (r^2-1)} - \frac{ \cos(ar)(2+\cos(a))-3}{r^2-1} - \frac{ \sin(ar)\sin(a)}{r(r^2-1)}\right),  $$
and the function $b_3(r)$   has the form
$$b_3(r) =-\frac{1}{48\pi a^4}\left( A(r)+rC(r)+r^2D(r)+r^3G(r)-72a^4r^4\right):=I_1(r),$$
where
$$A(r)=48+24\cos(a)-8a\sin(a)-2a^4 Ci(a) -4 a^2 \cos(a)+a^4 Ci(2ar+a)$$
 $$ +a^4 Ci(2ar-a)-24\cos(a)\cos^2(ar)+4a^3\sin(a)-a^4\ln(2r-1)$$
 $$-a^4\ln(2r+1)-48\cos^2(ar); $$
$$C(r)=-24a\cos(a)\sin(2ar)+8a\sin(a)\cos^2(ar)+4a^2\cos(a)\cos^2(ar)$$
$$-4a^3\sin(a)\cos^2(ar)+4a^2\sin(2ar)(2\sin(a)+a\cos(a))-48a\sin(2ar); $$
$$D(r)= -12a^4 (Ci(2ar-a)+Ci(2ar+a))+24a^2(\cos(a)+a^2 Ci(a))$$
$$-40 a^3 \sin(a) +12a^4 ln(4r^2-1)-4a^4+48a^2+32a^3\sin(a)\cos^2(ar);  $$
$$G(r)=16a^4(Ci(2ar-a)-Ci(2ar+a)+ln(2r+1)-ln(2r-1));$$
 $$ Ci(x)=\gamma+\ln(x)+\int_0^x\frac{\cos(t)-1}{t}dt,  \;\;\;
   Si(x)=\int_0^x\frac {\sin(t)}{t}dt,$$
and $\gamma$  is the Euler's  constant.

For $a=1$ plots of  $B_3(r),$  $b_3(r),$  and their normalized transformations  are shown on Fig. 1 and Fig. 2.

\begin{center}
\begin{minipage}{5cm}
 {\psfig{figure=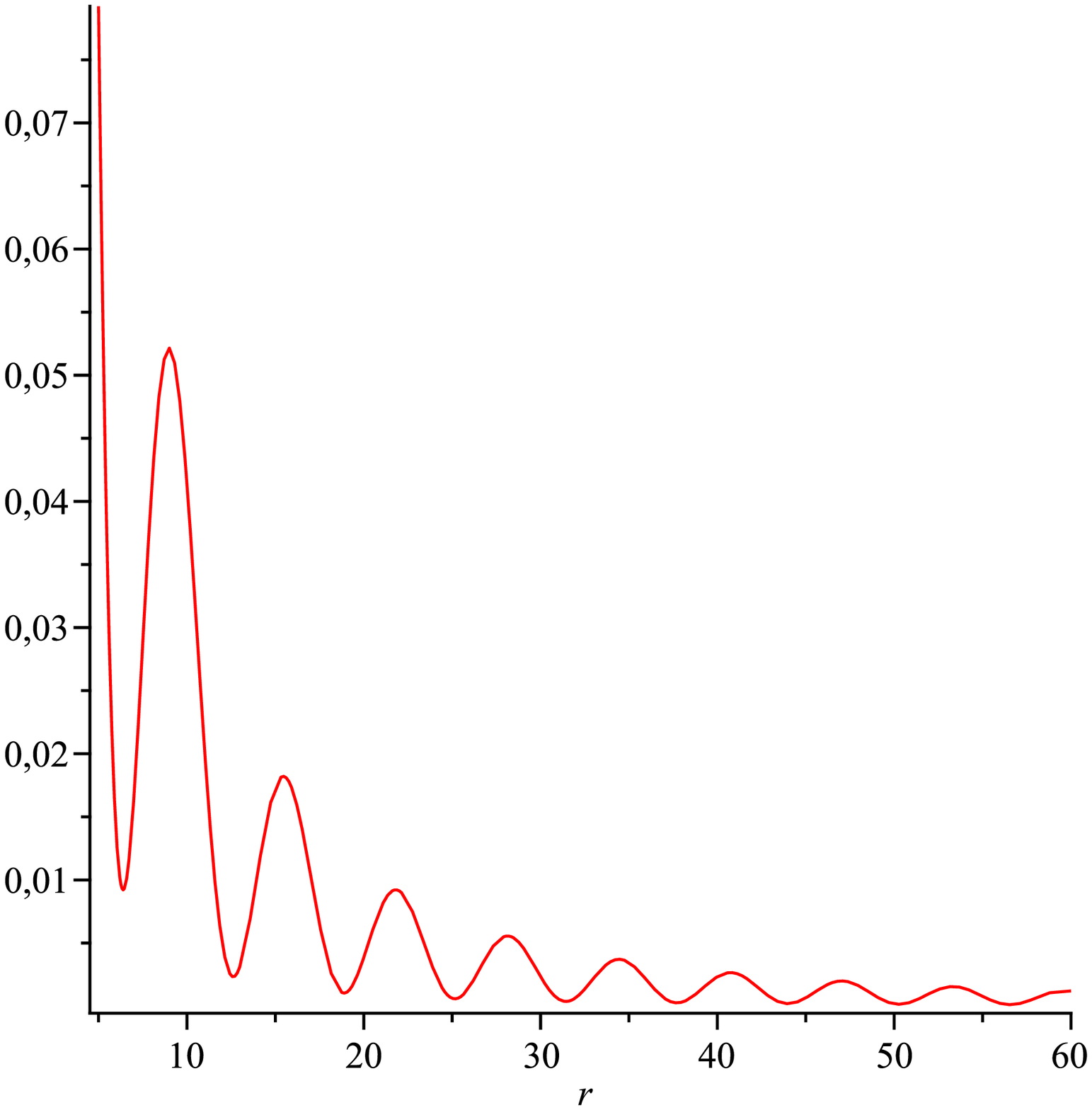 ,width=5cm}} \hspace{0.5cm}{  {\bf\rm Fig. 1a.} Plot of  $B_3(r)$} \end{minipage}
\begin{minipage}{5cm}
{\psfig{figure=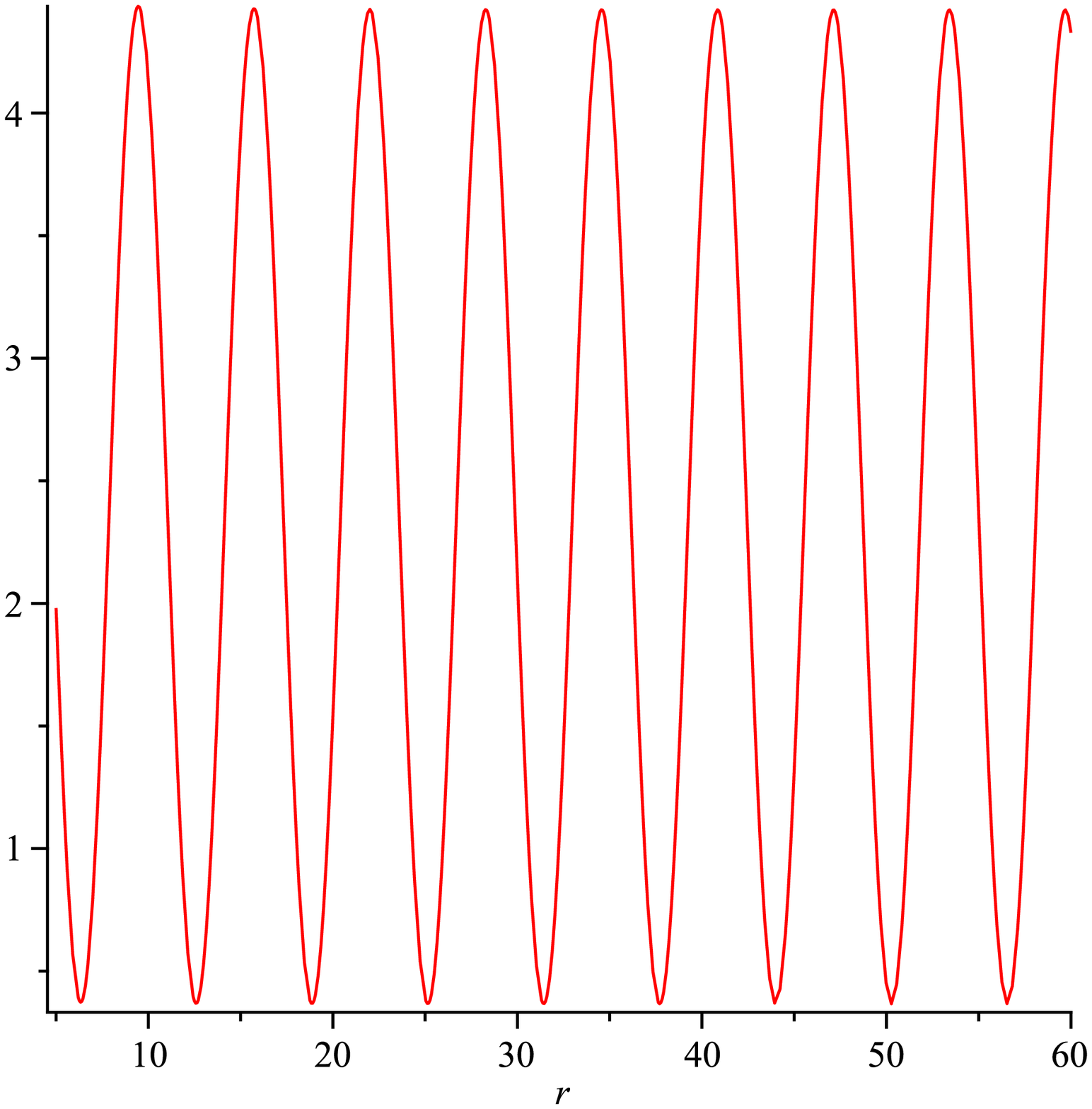,width=5cm}} \hspace{0.5cm}{ {\bf\rm Fig. 1b.} Plot of  $r^2 B_3(r)$} \end{minipage}
\end{center}

\begin{center}
\begin{minipage}{5cm}
 {\psfig{figure=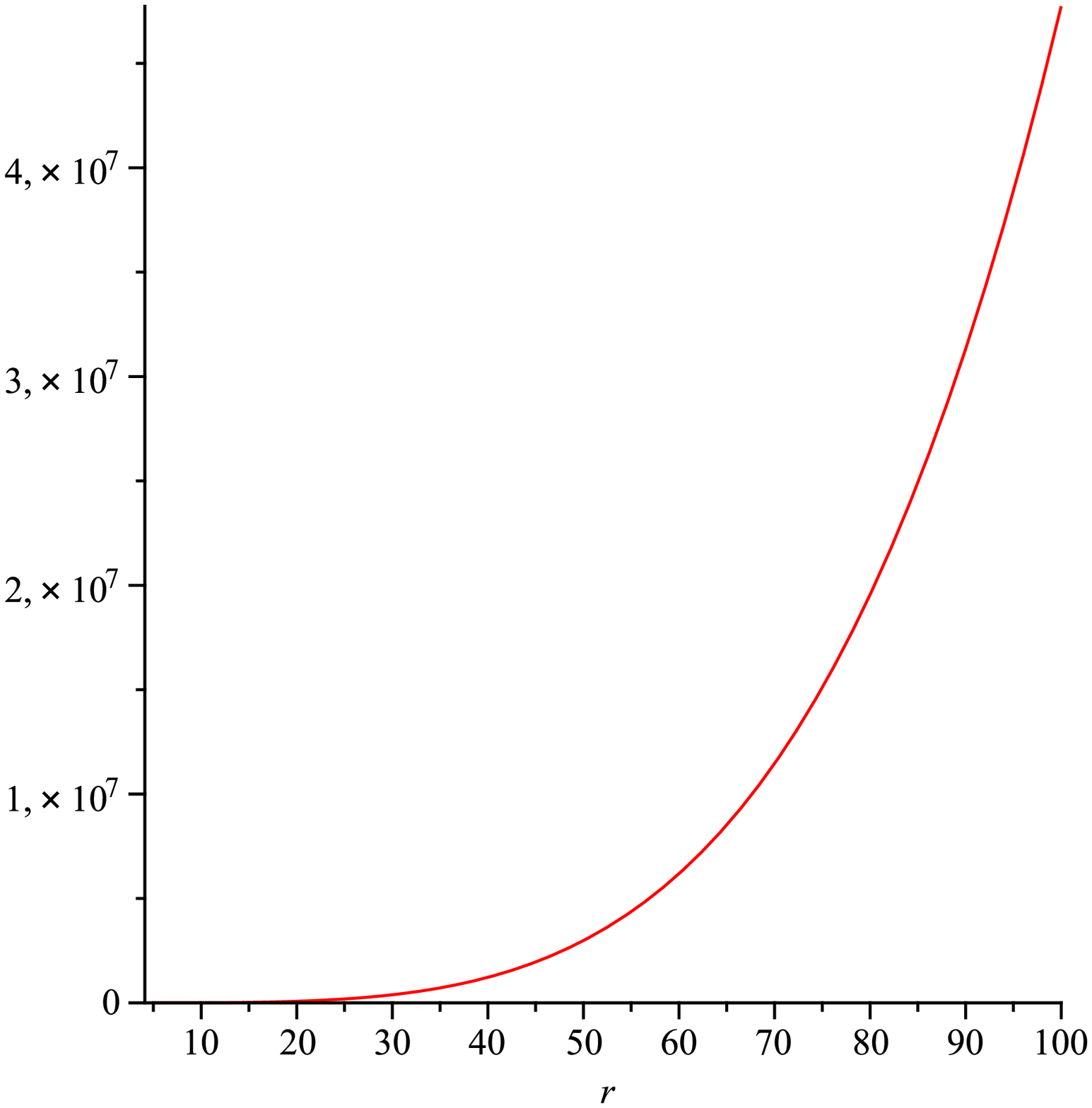 ,width=5cm}} \hspace{0.5cm}{ {\bf\rm Fig. 2a.}  Plot of  $b_3(r)$} \end{minipage}
\begin{minipage}{5cm}
{\psfig{figure=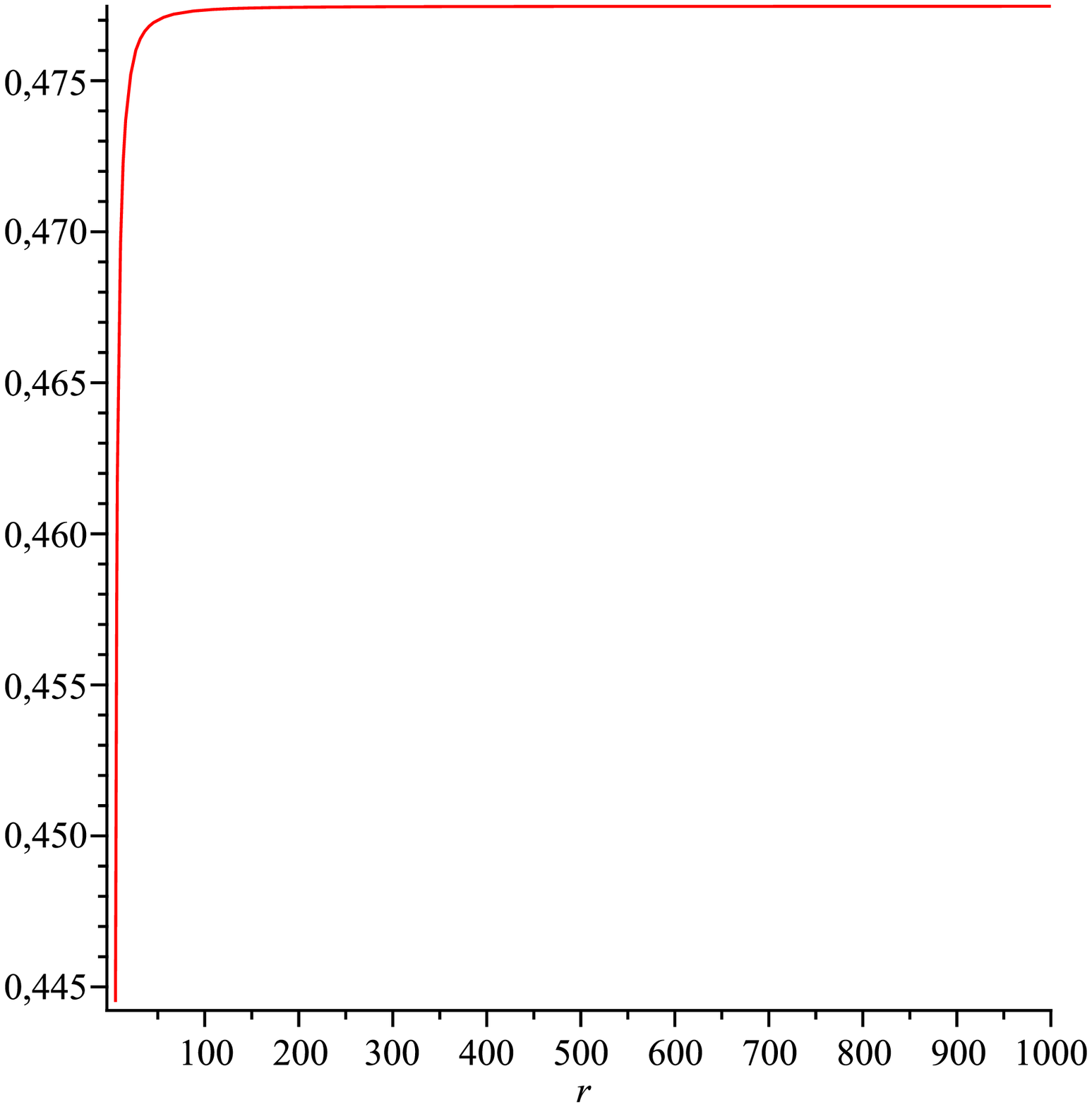,width=5cm}} \hspace{0.5cm}{ {\bf\rm Fig. 2b.} Plot of  $r^{-4} b_3(r)$} \end{minipage}
\end{center}

\end{example}

\begin{example}
Let
   $$\varphi(\lambda)= \left\{
     \begin{array}{ll}
       \frac{1+\sin(\lambda)}{\lambda}, & 0\le \lambda \le a; \\
       0, & \;\; \textrm{otherwise}.
     \end{array}
     \right.$$
Then the corresponding covariance function is
$$ B_3(r)= C \left( \frac{1-\cos(ar)(1+\sin(a))}{r^2-1}+\frac{ \sin(ar)\cos(a)}{r(r^2-1)}+\frac{ \cos(ar)-1}{r^2 (r^2-1)}\right),  $$
and the function  $b_3(r)$   has the form
$$b_3(r) =-\frac{1}{48\pi a^4}\left( A(r)+rC(r)+r^2D(r)+r^3G(r)-24a^4r^4\right),$$
where
 $$A(r)= 24 +  (4a^2-24) \sin(a)\cos^2(ar)+4a(a^2-2)\cos(a)\cos^2(ar)$$
 $$+a^4Si(2ar+a)+(8a-4a^3)\cos(a)-2a^4Si(a)-a^4\sin(2ar-a)$$
 $$+(24-4a^2)\sin(a)-24\cos^2(ar);   $$
$$D(r)=12a^4\left(Si(2ar-a)-Si(2ar+a)\right) +24a^2\left( a^2 Si(a)+\sin(a) \right)$$
$$+40 a^3\cos(a)-32 a^3\cos(a)\cos^2(ar)+24a^2;  $$
$$G(r)= -16a^4\left( Si(2ar+a)+Si(2ar-a)  \right).$$

For $a=1$ plots of  $B_3(r)  ,$  $b_3(r),$  and their normalized transformations  are shown on Fig.3 and Fig. 4.

\begin{center}
\begin{minipage}{5cm}
 {\psfig{figure=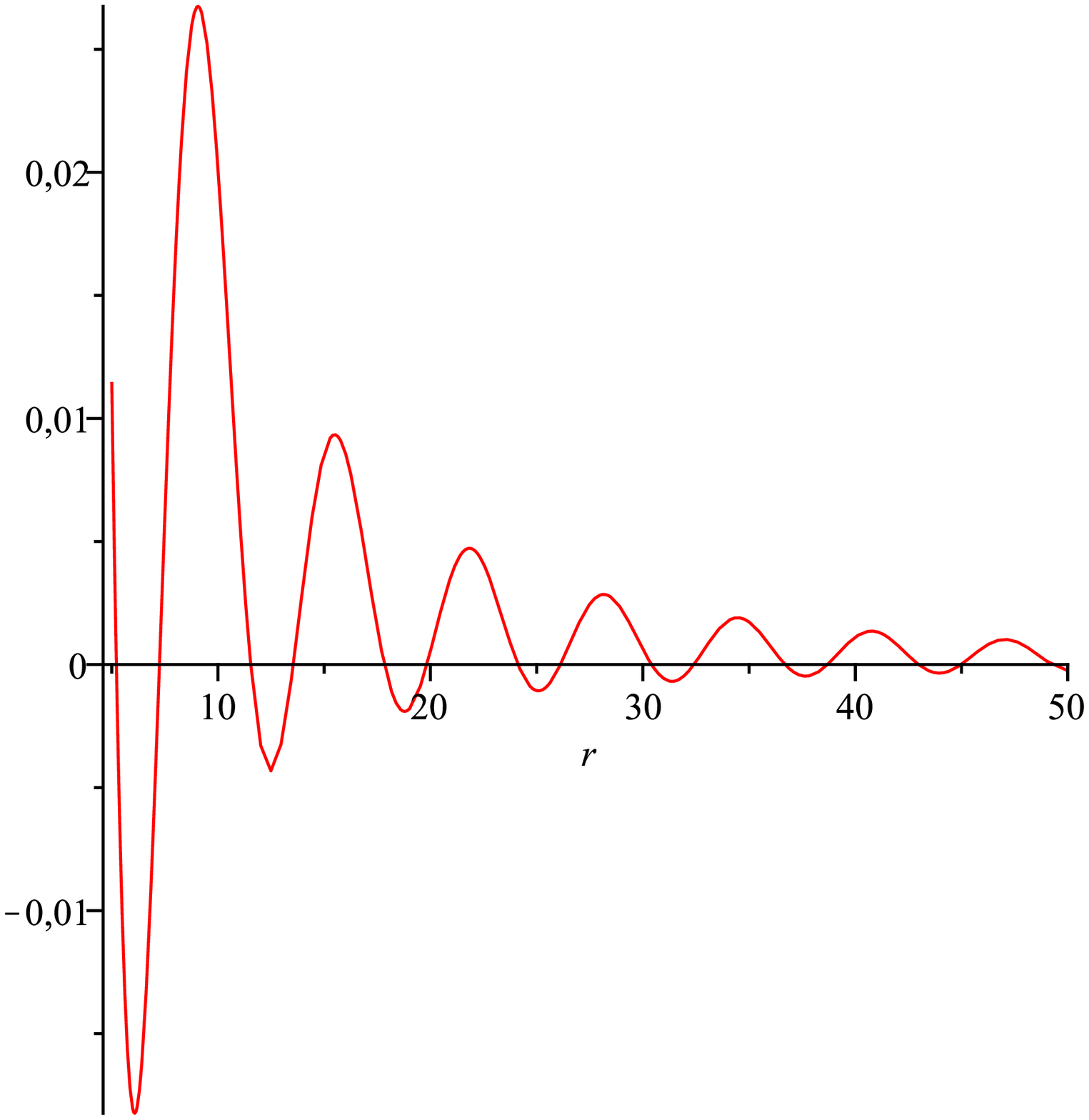 ,width=5cm}} \hspace{0.5cm}{ { \bf\rm Fig. 3a. } Plot of  $B_3(r)$} \end{minipage}
\begin{minipage}{5cm}
{\psfig{figure=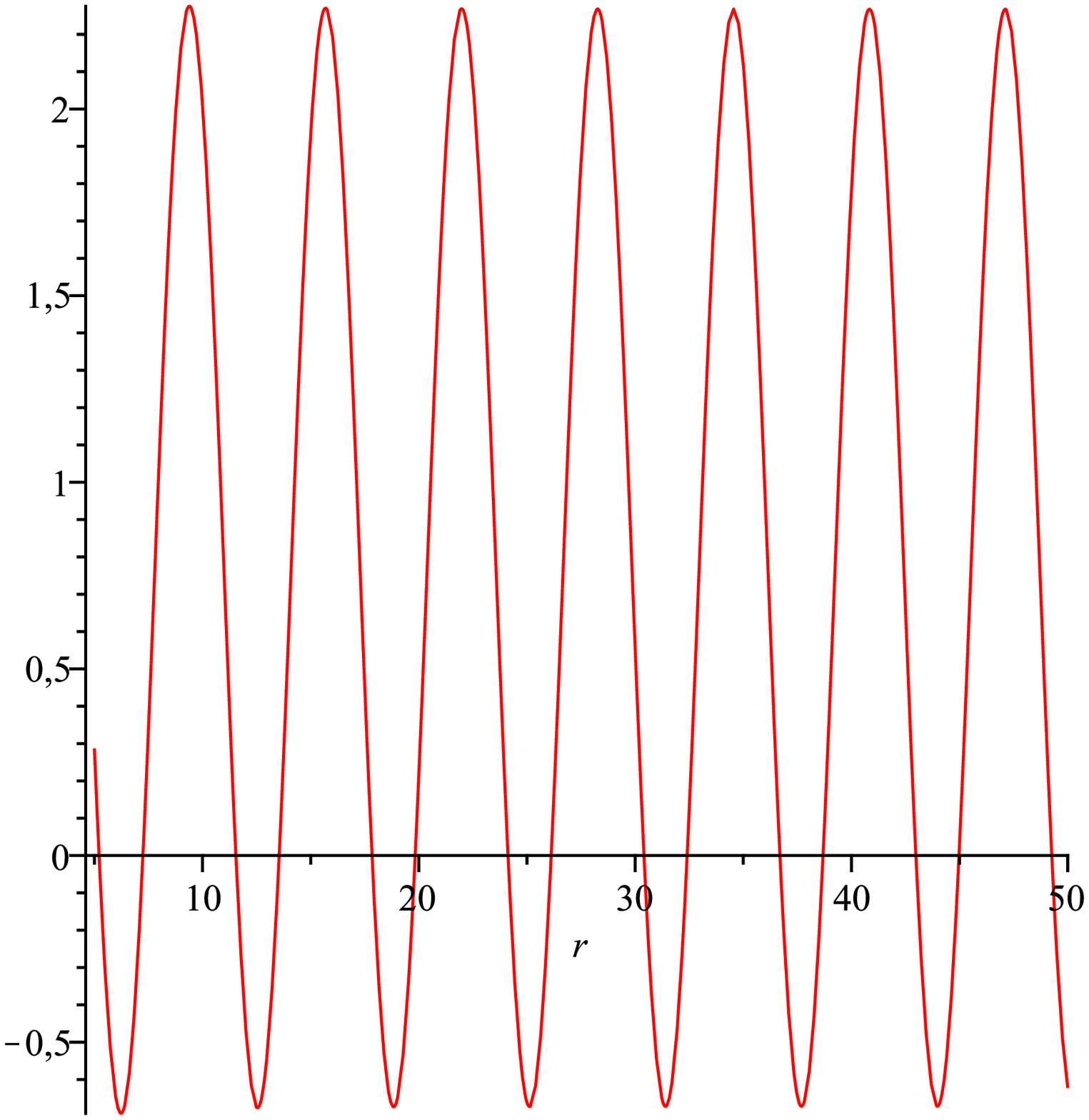,width=5cm}} \hspace{0.5cm}{ { \bf\rm Fig. 3b. } Plot of  $r^2 B_3(r)$} \end{minipage}
\end{center}

\begin{center}
\begin{minipage}{5cm}
 {\psfig{figure=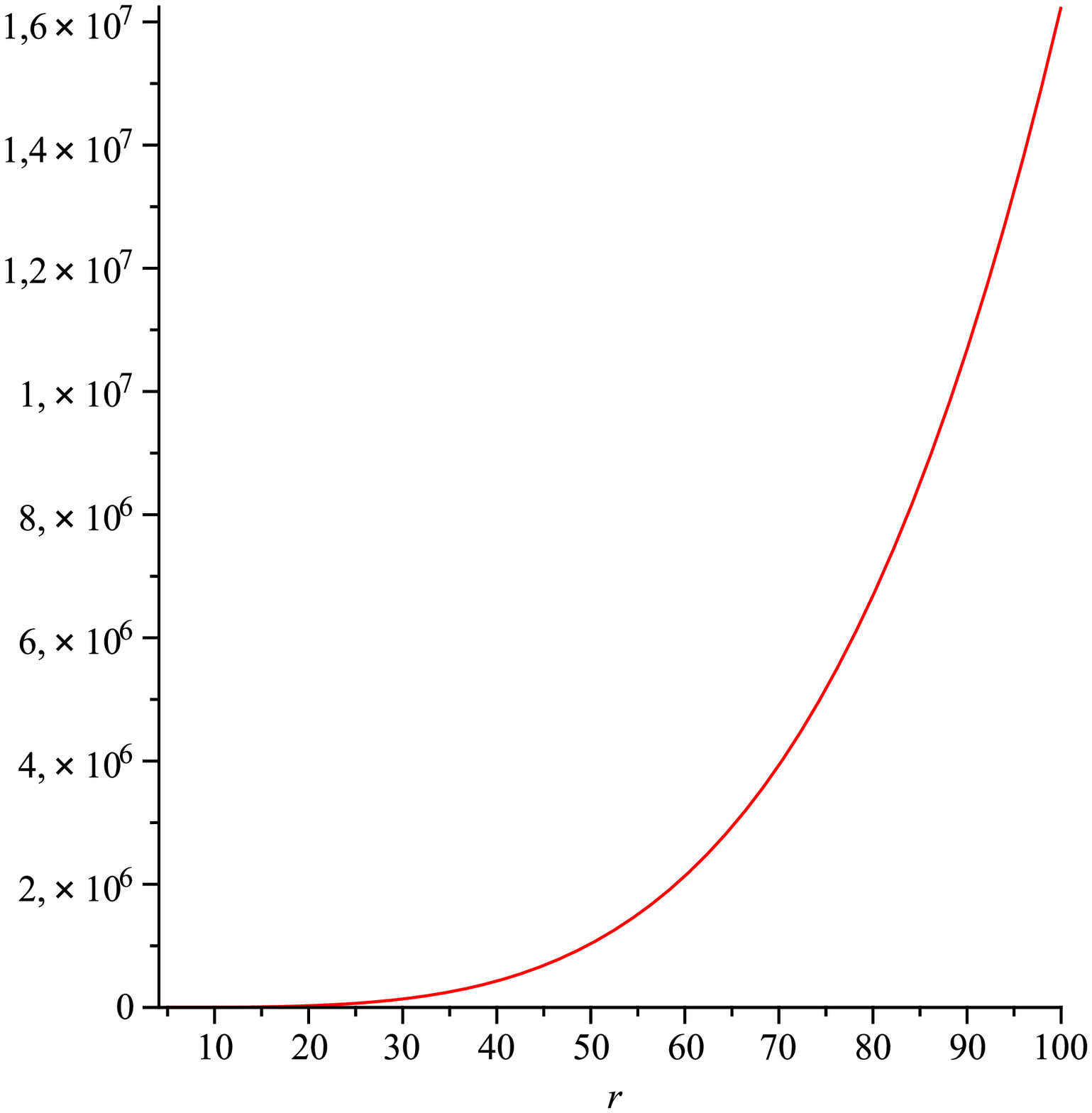 ,width=5cm}} \hspace{0.5cm}{ { \bf\rm Fig. 4a.} Plot of  $b_3(r)$} \end{minipage}
\begin{minipage}{5cm}
{\psfig{figure=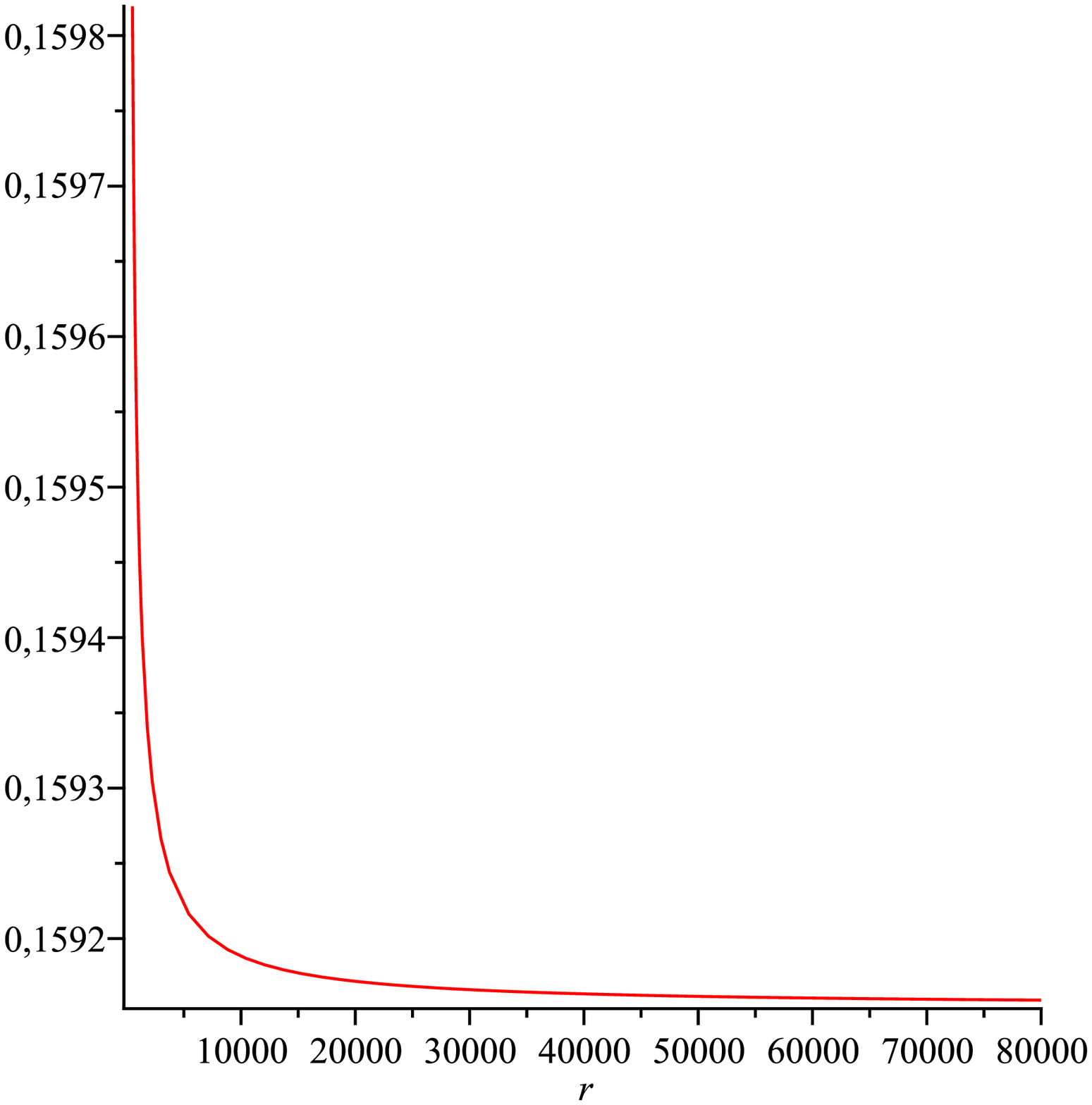,width=5cm}} \hspace{0.5cm}{ { \bf\rm Fig. 4b.} Plot of  $r^{-4} b_3(r)$} \end{minipage}
\end{center}

\end{example}

\begin{example}
Let
   $$\varphi(\lambda)= \left\{
     \begin{array}{ll}
        \frac{2(2+\cos(\lambda))}{\lambda}, & 0\le \lambda \le 1; \\
       \frac{1+\sin(\lambda)}{\lambda}, & 1< \lambda \le 2; \\
       0, & \;\; \textrm{otherwise}.
     \end{array}
     \right.$$
Then the corresponding covariance function is
 $$B_3(r) = -\frac{\sqrt{2}\left( A(r) r^2+C(r) r+D(r)\right) }{\sqrt{\pi}r^2 (r^2-1)}, $$
where
$$ A(r)= (3+2\cos(1))\cos(r)-\cos(r)\sin(1)$$
$$
+ 2\cos^2(r)(1+\sin(2)) -\sin(2)-7 ;$$
 $$C(r)= 2\sin(r)\sin(1) +\sin(r)\cos(1) -2\cos(2)\sin(r)\cos(r) ; $$
 $$D(r)=   -3\cos(r) -2\cos^2(r)+5. $$
And $b_3(r)$   has the form
$$  b_3(r)=C   \cdot  r^6 \left( \int_0^1 \frac{J_{\frac 3 2}^2(r x)}{(r x)^3}\, 2x \,(2+\cos(x))\, dx+\right. $$
 $$\left.+
 \int_1^2 \frac{J_{\frac 3 2}^2(r x)}{(r x)^3}\, x \,(1+\sin(x))\, dx  \right)=:2I_1(r)+I_2(r).
$$
  $I_1(r) $ is defined in Example \ref{ex1} and
 $$I_2(r) =\frac{1}{48\pi}\left( A(r)+rC(r)+r^2D(r)+r^3G(r)+72r^4\right),$$
where
$$A(r)= 24 -2Si(1)+
2Si(2) -30\cos^2(r)+ 6\cos^4(r)$$  $$+
\left(4\cos(2)-2\sin(2)\right)\cos^2(r)\sin^2(r)- Si(4r+2)+Si(4r-2)$$
$$+Si(2r+1)-Si(2r-1)
+\sin^2(r)\left(4\cos(1)+20\sin(1)\right);$$
$$C(r)=  \sin(r)\cos^3(r)\left(8 \sin(2) +16\cos(2)+24    \right)$$
$$  -\sin(r)\cos(r)\left(60+ 4 \sin(2) +16\cos(1) +40\sin(1)+8\cos(2)\right);
$$
$$D(r)=40\cos(1)+24\sin(1)+24Si(1) -24Si(2) -4\cos(2)-6\sin(2)$$
$$-64 \cos(2)\cos^2(r)\sin^2(r)-32 \cos(1)\cos^2(r)$$
$$+12\left(  Si(2r-1) - Si(4r-2) +Si(4r+2) - Si(2r+1)\right)+18;  $$
$$G(r)=16\left( Si(4r-2)+Si(4r+2)-Si(2r-1) -Si(2r+1) \right).   $$
Plots of  $B_3(r),$  $b_3(r),$  and their normalized transformations are shown on Fig. 5 and Fig. 6.
\begin{center}
\begin{minipage}{5cm}
 {\psfig{figure=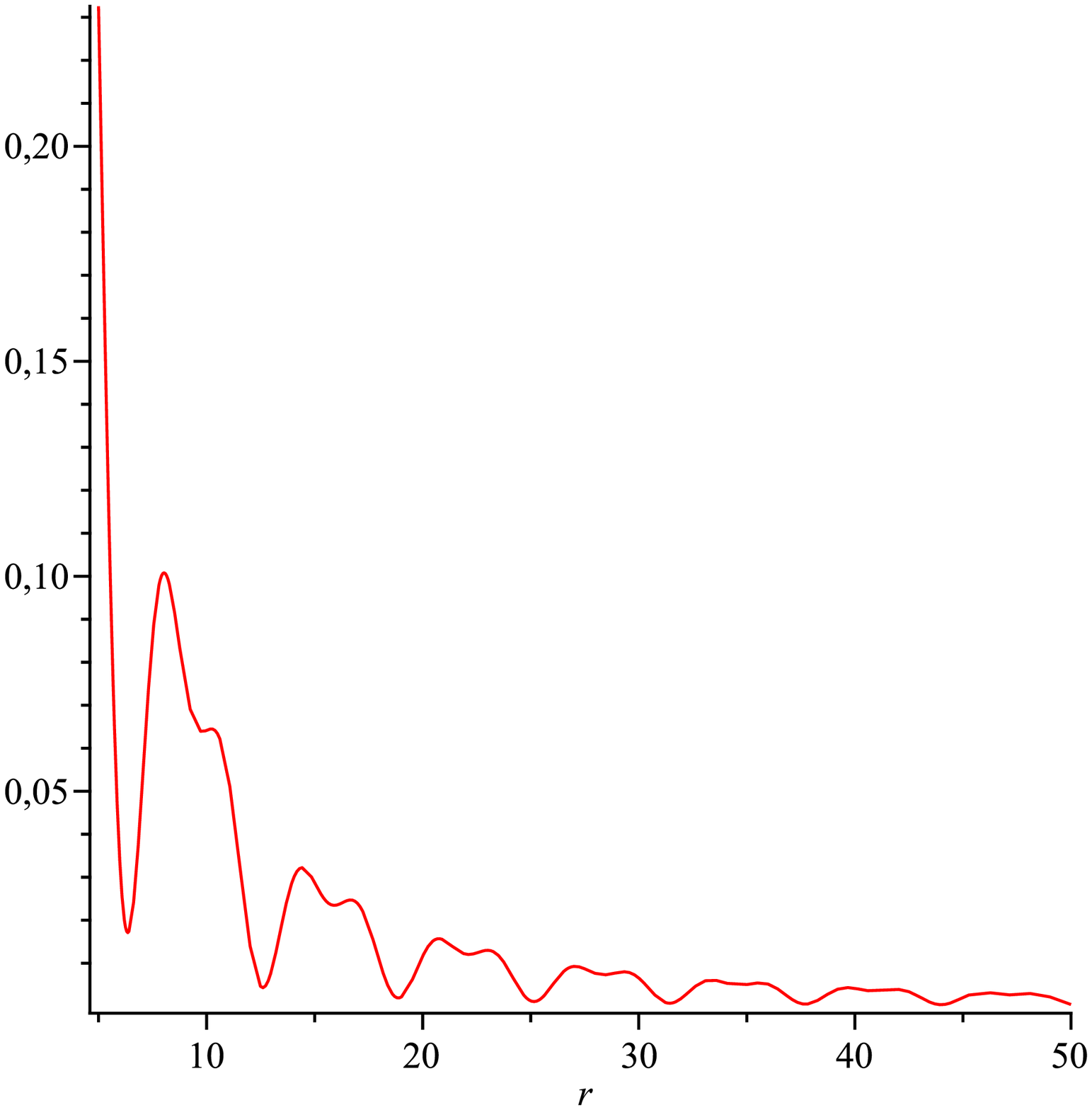 ,width=5cm}} \hspace{0.5cm}{  {\bf\rm Fig. 5a.} Plot of  $B_3(r)$} \end{minipage}
\begin{minipage}{5cm}
{\psfig{figure=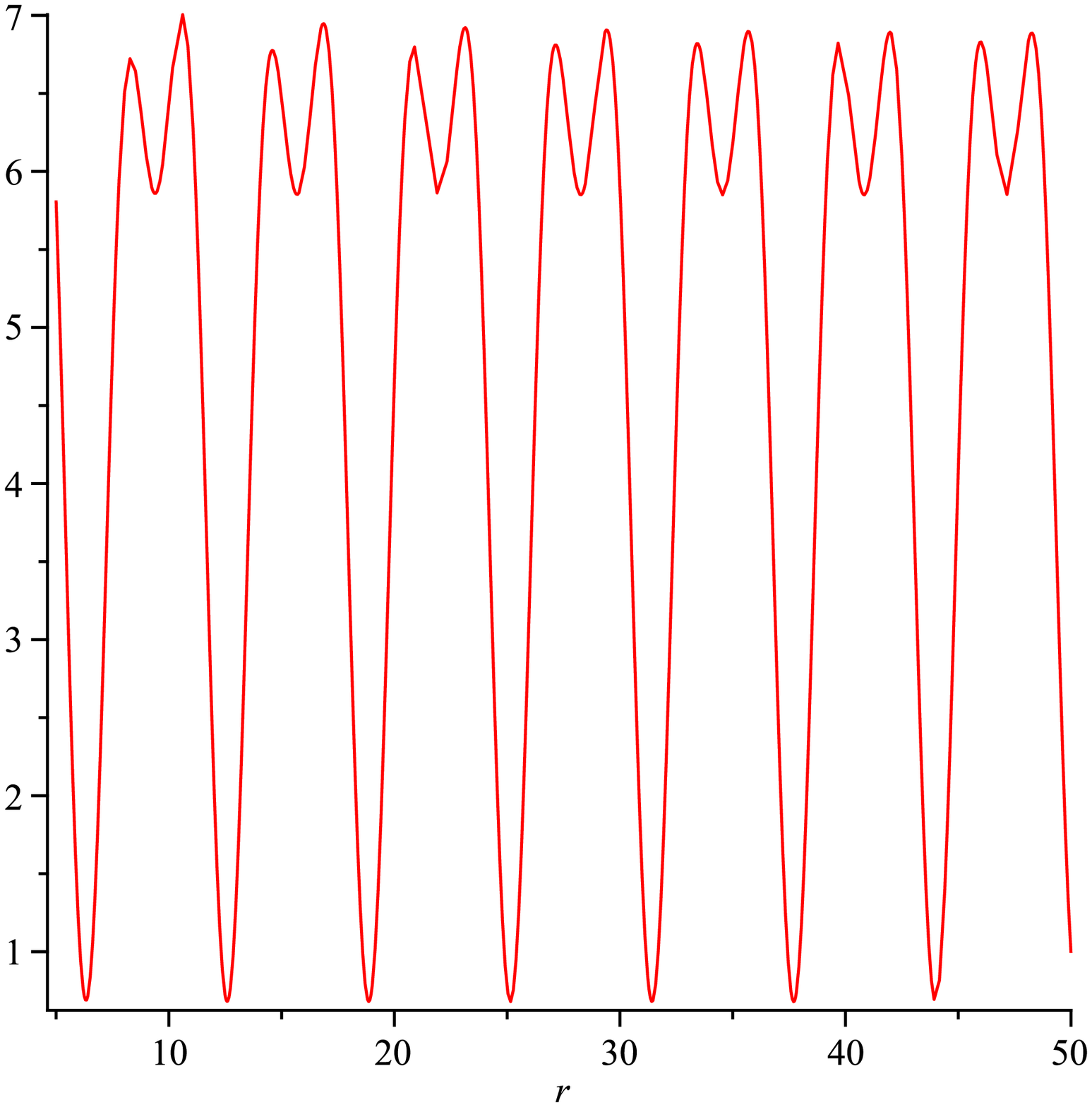,width=5cm}} \hspace{0.5cm}{  {\bf\rm Fig. 5b. } Plot of  $r^2 B_3(r)$} \end{minipage}
\end{center}
\begin{center}
\begin{minipage}{5cm}
 {\psfig{figure=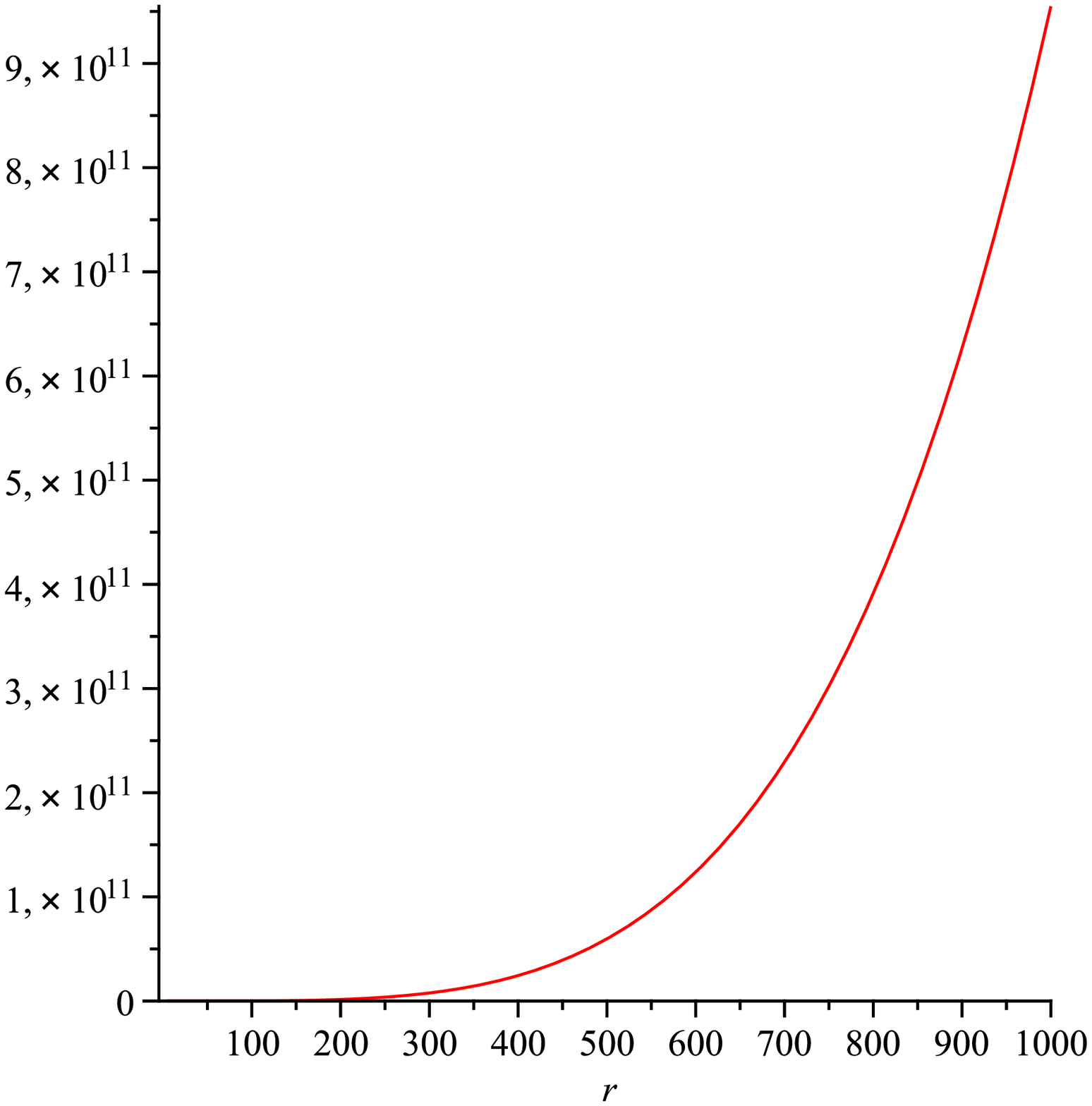 ,width=5cm}} \hspace{0.5cm}{  {\bf\rm Fig. 6a. } Plot of  $b_3(r)$} \end{minipage}
\begin{minipage}{5cm}
{\psfig{figure=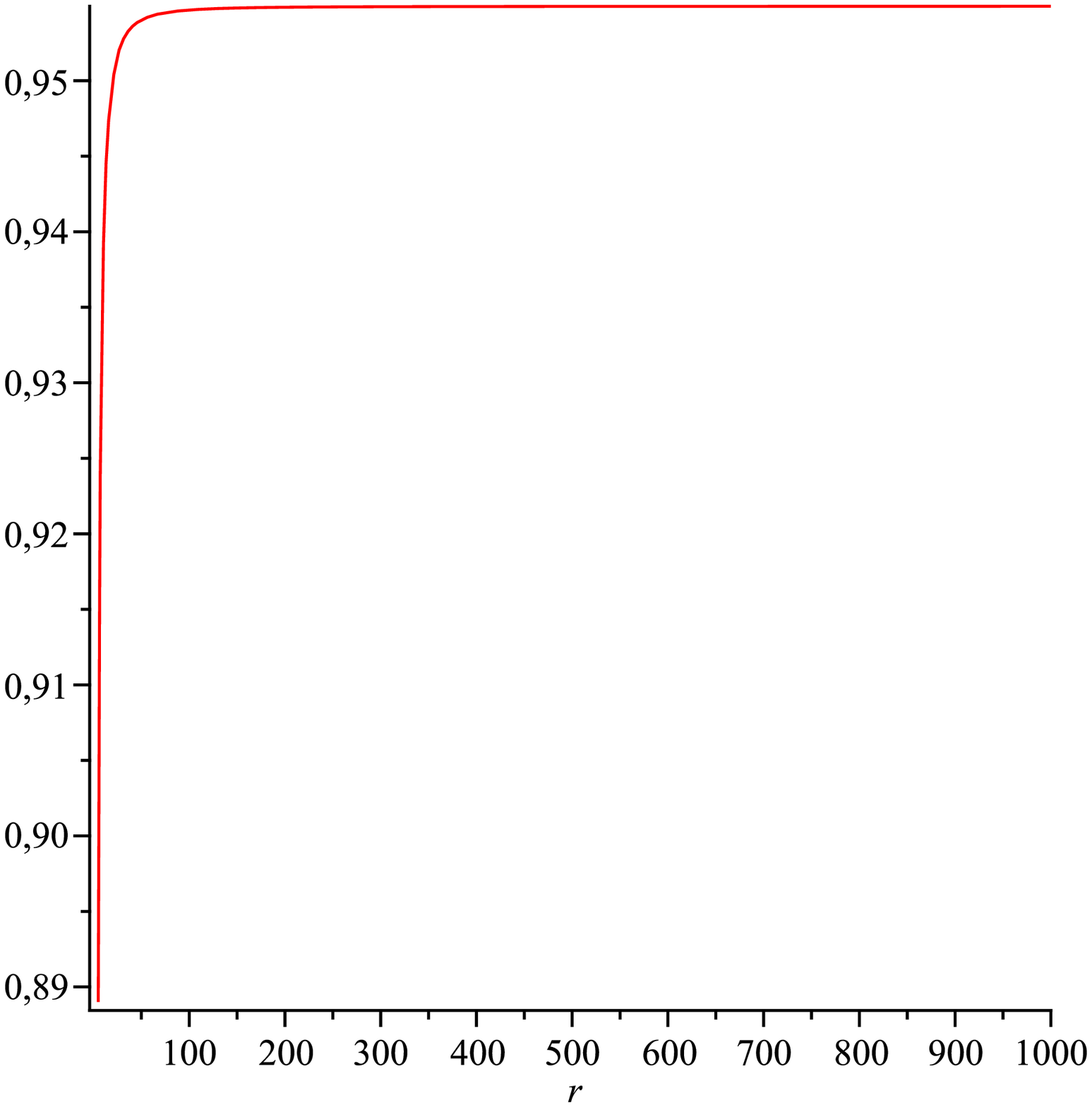,width=5cm}} \hspace{0.5cm}{  {\bf\rm Fig. 6b. } Plot of  $r^{-4} b_3(r)$} \end{minipage}
\end{center}
\end{example}

These new examples can find applications in spatial statistics.  Some empirical covariance functions demonstrate similar oscillatory behaviour.

In all examples above spectral densities have a singularity at zero, which is the long-range dependence case.   Functions $\varphi(\lambda)$ have the asymptote $\frac{1}{\lambda}$ when $\lambda \to +0, $
but corresponding covariance functions $B_3(r)$ are not regularly varying when $r\to \infty. $  Hovewer, functions $b_3(r)$  exhibit regular varying behaviour.

In the paper we investigate a more general class of spectral densities, which admit singularities at non-zero frequencies.  For these cyclical long-range dependent fields we study asymptotic behaviour of weighted functionals, which  generalize functionals (\ref{g001}).

\section{Spectral densities of cyclical long-range\\ dependent random fields}

Various properties of long-range dependent random fields were investigated in  \cite{leo1,leo2}.  The considered random fields had spectral singularities at zero and the spectral densities of the  form
  \begin{equation}\label{g1}
     {\varphi}(|\lambda{}|)=\frac{h_0(|\lambda{}|)}{|\lambda{}|^{n-\alpha{}}},
  \end{equation}
where $h_0(\cdot)$ is a bounded function defined on $\mathbb{R^+}:=[0,+\infty),$ and $ h_0(\cdot)$ is continuous in some neighborhood of
$0,$  $h_0(0)\ne 0.$

In this paper we study random fields with singularities of $\mathbf{\Phi}(\lambda{}) $ at the points $a_0,\dots,a_k\in [0,+\infty),$ $a_0=0,\ a_i<a_j,$ for $i<j.$

\begin{definition}  {\it  $\xi(x)$ is a cyclical long-range dependent random field if  it is  a  random field with the spectral density
\begin{equation}\label{g5}
    \varphi(|\lambda{}|)=\frac{h(|\lambda{}|)}{|\lambda{}|^{n-\alpha{}_0}}\prod_{i=1}^{k-1}\frac{1}{||\lambda{}|-a_i|^{1-\alpha{}_i}},
\end{equation}
where $\varphi(|\lambda{}|)\in L_1(\mathbb{R}^n),$ $0<\alpha{}_0<n,$ $0<\alpha{}_i<1,$ for $i=1,..,k-1, k\in \mathbb{N}.$ $h(\cdot)\ge 0$ is a bounded function defined on $[0,+\infty).$  $h(\cdot)$ is continuous in some neighborhoods of
 $a_i,$ and $h(a_i)\ne 0,\; i=0,...,k-1.$}
\end{definition}

 \begin{remark}
 Another additive models of spectral densities of seasonal/cyclic random fields and processes were considered in \cite{gir1, ole1, olkly, ole2}.
 \end{remark}

\begin{remark}
     For random processes $(n=1)$ the spectral density (\ref{g5}) has singularities at $k$ points. For random fields $(n\ge 2)$, the spectral density has singularities at all points of the $n$-dimensional sphere $S_n(a_i)=\{\lambda\in\mathbb{R}^n :|\lambda|=a_i\}$  when $a_i\ne 0.$
 \end{remark}

 \begin{remark}
   If $k=1$ then we obtain the class of long-range dependent random fields with  spectral densities given by {\rm(\ref{g1})}.
 \end{remark}


\section{Limit theorem }
In this section we use the approach and methods introduced in \cite{olkly, ole2}
to obtain limit theorems for random fields  with spectral densities given by  {\rm(\ref{g5})}.

 We  consider radial essentially non-zero weight functions $f_r(\cdot):\mathbb{R}^n \to \mathbb{R} $
 and $f_{a_j,r}(\cdot):\mathbb{R}^n \to\mathbb{R},$  $j=0,...,k-1,$  which satisfy the conditions:
   \begin{enumerate}
     \item $f_{a_j,r}(\cdot)\in L_m(\mathbb{R}^n),$ $m=1,2\,;$
     \item the Fourier transform of\, ${r^{-n}}f_{a_j,r}(\cdot)$ is a function of the form $g_{a_j}(r(|\lambda{}|-a_j)),$
     \item $g_{a_j}(s), s\in \mathbb{R}$ is an even function, and
\begin{equation}
    \label{g}\displaystyle g_{a_j}^2(s)\le \frac{C}{s},\quad \mbox{for all}\,  s> 0.
\end{equation}
   \end{enumerate}

Using approach  in  \cite{ole2} it is easy to show that if $f_r(\cdot)$ can be represented in the form
$    \label{f} f_r(x)=\sum_{j=0}^{k-1} C_j f_{a_j,r}(x),$
then this representation  is unique.

 \begin{theorem}\label{a2}
Let the isotropic spectral density $\varphi(\cdot)$ satisfy {\rm(\ref{g5})}. Then the finite dimensional distributions of the process
   $$X_{r,a_j}(t)= \left\{
               \begin{array}{ll}
{\displaystyle\frac{\sqrt{V_{a_j}}}{\sqrt{2\,h(a_j)}r^{n-\alpha_j/2}}\int_{\mathbb{R}^n}f_{a_j,rt}\left(x\right)\xi(x)dx,} & \ t\in(0,1];\vspace{1mm} \\
                 0, &\ t=0
               \end{array}
             \right. $$
 converge weakly to the finite dimensional distributions of the process
   $$ X_{a_j}(t)=\left\{
               \begin{array}{ll}
{\displaystyle{t^n}\int_{\mathbb{R}^n}\frac{g_{a_j}(|u|t)}{|u|^{\frac {n-\alpha{}_j} 2}}dZ(u)},& \ t\in(0,1];\vspace{1mm} \\
                 0, &\ t=0\,,
               \end{array}
             \right. $$
when $r\to\infty, \; j=1,...,k-1,$ $Z(\cdot)$ is the Wiener measure on $(\mathbb{R}^n, \mathfrak{B}^n),$ and
  $$  V_{a_j}=a_j^{1-\alpha_0}\prod_{ \scriptsize  \begin{array}{l}    i=1 \\  i\ne j   \end{array}}^{k-1}|a_j-a_i|^{1-\alpha_i}. $$
\end{theorem}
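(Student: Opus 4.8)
The plan is to use Gaussianity to reduce everything to a covariance computation. For fixed $r$ and $t$, the functional $X_{r,a_j}(t)$ is a continuous linear functional of the zero mean Gaussian field $\xi$ (finiteness of all the integrals that occur being guaranteed by condition~1 together with $\varphi\in L_1(\mathbb R^n)$ and (\ref{g})), so every finite vector $\bigl(X_{r,a_j}(t_1),\dots,X_{r,a_j}(t_p)\bigr)$ is Gaussian with zero mean, and the same holds for the limit process $X_{a_j}(\cdot)$. Hence it suffices to show that, for all $t_1,t_2\in(0,1]$,
$$
{\rm cov}\bigl(X_{r,a_j}(t_1),X_{r,a_j}(t_2)\bigr)\longrightarrow{\rm cov}\bigl(X_{a_j}(t_1),X_{a_j}(t_2)\bigr)\qquad\text{as }r\to\infty,
$$
since convergence of all covariances forces convergence of the characteristic functions of arbitrary linear combinations $\sum_l c_l X_{r,a_j}(t_l)$, which is precisely weak convergence of the finite dimensional distributions; the case $t=0$ is trivial.

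Next I would move to the spectral domain. Writing $\xi(x)=\int_{\mathbb R^n}e^{i\langle\lambda,x\rangle}\sqrt{\varphi(|\lambda|)}\,dW(\lambda)$ with $W(\cdot)$ the white-noise control measure of $\xi$ and applying Parseval's identity for stochastic integrals gives
$$
\int_{\mathbb R^n}f_{a_j,rt}(x)\,\xi(x)\,dx=\int_{\mathbb R^n}\widehat{f_{a_j,rt}}(\lambda)\sqrt{\varphi(|\lambda|)}\,dW(\lambda),
$$
and, by condition~2, $\widehat{f_{a_j,rt}}(\lambda)=(rt)^n g_{a_j}\bigl(rt(|\lambda|-a_j)\bigr)$. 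Substituting into the definition of $X_{r,a_j}(t)$, taking covariances, passing to polar coordinates in $\mathbb R^n$ and then substituting $|\lambda|=a_j+s/r$, a routine calculation yields
$$
{\rm cov}\bigl(X_{r,a_j}(t_1),X_{r,a_j}(t_2)\bigr)=\frac{V_{a_j}(t_1t_2)^n}{2h(a_j)}\cdot\frac{2\pi^{n/2}}{\Gamma(n/2)}\int_{-ra_j}^{\infty}g_{a_j}(t_1 s)\,g_{a_j}(t_2 s)\,\psi_r(s)\,ds,
$$
where $\psi_r(s):=r^{\alpha_j-1}\varphi(a_j+s/r)(a_j+s/r)^{n-1}$.

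The heart of the argument is the limit passage inside this integral. For each fixed $s\ne0$, continuity of $h$ at $a_j$ with $h(a_j)\ne0$, together with continuity and non-vanishing at $a_j$ of the remaining factors of $\varphi$, gives the pointwise limit $\psi_r(s)\to h(a_j)\,a_j^{\alpha_0-1}\,|s|^{\alpha_j-1}\prod_{i\ne j}|a_j-a_i|^{\alpha_i-1}$. Inserting this limit, the precise choice of $V_{a_j}$ makes all the $a_i$- and $r$-dependent constants cancel, and the right-hand side collapses to
$$
(t_1t_2)^n\,\frac{\pi^{n/2}}{\Gamma(n/2)}\int_{-\infty}^{\infty}\frac{g_{a_j}(t_1 s)\,g_{a_j}(t_2 s)}{|s|^{1-\alpha_j}}\,ds=(t_1t_2)^n\int_{\mathbb R^n}\frac{g_{a_j}(|u|t_1)\,g_{a_j}(|u|t_2)}{|u|^{n-\alpha_j}}\,du,
$$
where I used that $g_{a_j}$ is even and then returned to polar coordinates; by the isometry property of the integral with respect to the Wiener measure the right-hand side is exactly ${\rm cov}\bigl(X_{a_j}(t_1),X_{a_j}(t_2)\bigr)$ (convergence of this integral near $0$ follows from $\alpha_j>0$ and near infinity from (\ref{g}) with $\alpha_j<1$, so $X_{a_j}(\cdot)$ is well defined).

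The main obstacle is justifying this limit rigorously, i.e. exhibiting a single integrable majorant of $s\mapsto g_{a_j}(t_1 s)g_{a_j}(t_2 s)\psi_r(s)$, uniform for $r\ge r_0$, so that the dominated convergence theorem applies. I would split $\mathbb R$ into three ranges. Near $s=0$ the bound (\ref{g}) is too weak, but the integrable factor $|s|^{\alpha_j-1}$ (with $\alpha_j>0$) coming from $\varphi$ controls the integrand. When $a_j+s/r$ approaches another singular point $a_i$ (i.e. $s\approx r(a_i-a_j)$) or the origin ($s\approx-ra_j$) — both forcing $|s|\to\infty$ — one combines the local integrability in $\mu$ of $||\mu|-a_i|^{-(1-\alpha_i)}$ (respectively of $|\mu|^{-(n-\alpha_0)}$) with the decay $|g_{a_j}(t_1 s)g_{a_j}(t_2 s)|\le C/(|s|\sqrt{t_1t_2})$ from (\ref{g}). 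On the remaining part of the axis, boundedness of $h$ and the $O(s^{-2})$ decay of the product of the $g$'s close the estimate. Patching these three bounds into one dominating function uniform in $r$ is the only delicate point; with it in hand, Lebesgue's dominated convergence theorem yields the covariance limit, and the theorem follows.
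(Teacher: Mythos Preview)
Your proposal is correct in substance and follows a route that is close to, but organized differently from, the paper's own proof. The paper works at the level of stochastic integrals: it writes $I_{a_j}(r)\stackrel{d}{=}\int g_{a_j}(r(|\lambda|-a_j))\sqrt{\varphi(|\lambda|)}\,dW(\lambda)$, splits this into $K_1$ (over $|\lambda|>a_j$) and $K_2$ (over $|\lambda|<a_j$), performs a shell-type change of variables in each piece separately, and then shows that the $L^2$ distance between the rescaled $K_m(rt)$ and the limiting stochastic integral $Y_{a_j,m}(t)$ tends to zero. Your version bypasses the $K_1/K_2$ decomposition and the stochastic-integral manipulations entirely by computing covariances directly via the single substitution $|\lambda|=a_j+s/r$; this is more elementary and slightly shorter. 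What both arguments share is the crucial technical step: one cannot simply invoke a dominated convergence theorem with a single $r$-independent majorant, because the other singular frequencies of $\varphi$ sit at moving locations $s=r(a_i-a_j)$. The paper handles this by the split $B_1=\{|\tilde u|\le r^\beta\}$, $B_2=\{|\tilde u|>r^\beta\}$ with $\alpha_j<\beta<1$: on $B_1$ the factor $Q_r$ is uniformly small (pointwise DCT suffices there), and on $B_2$ the decay $g_{a_j}^2(s)\le C/s$ together with $\varphi\in L_1$ gives a vanishing direct bound. Your three-range sketch is doing exactly the same job, but your phrase ``patching these three bounds into one dominating function uniform in $r$'' is not quite accurate for the middle range --- the singular contributions at $s\approx r(a_i-a_j)$ are not dominated, they are shown to vanish by a direct estimate, just as in the paper's treatment of $B_2$. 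Once you make that distinction explicit (DCT on $|s|\le r^\beta$, direct smallness on $|s|>r^\beta$), your argument goes through and is equivalent to the paper's.
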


\begin{proof}  Let us define
\begin{equation*}
I_{a_j}(r):=\left\{
               \begin{array}{ll}
{\displaystyle{r^{-n}}\int_{\mathbb{R}^n}f_{a_j,r}\left(x\right)\xi(x)dx}, & \ r>0;\vspace{1mm} \\
                 0, &\ r=0\,.
               \end{array}
             \right.
\end{equation*}
The spectral representation of  the random field $\xi(x)$ is given by
\begin{equation*}
     \xi(x)=\int_{\mathbb{R}^n}e^{i<\lambda{},x>}\sqrt{\varphi(|\lambda{}|)}dW(\lambda{}),
\end{equation*}
where $W(\cdot) $ is the Wiener measure on  $(\mathbb{R}^n, \mathfrak{B}^n),$ see \cite{leo2}.

Using a stochastic analog of Plancherel's formula \cite{ole2} we obtain

\begin{gather} \label{g12j}
\begin{split}
      I_{a_j}(r) & = \,r^{-n} \int_{\mathbb{R}^n}\int_{\mathbb{R}^n}e^{i<\lambda{},x>}f_{a_j,r}\left({x}\right)dx\sqrt{\varphi(|\lambda{}|)}dW(\lambda{}) \\
    &\stackrel{d}{=}\int_{\mathbb{R}^n}g_{a_j}
(r(|\lambda{}|-a_j))\sqrt{\varphi(|\lambda{}|)}dW(\lambda{}).
\end{split}
\end{gather}

The integral on the right-hand  side of (\ref{g12j}) can be partitioned as follows
$$ \int_{\mathbb{R}^n} g_{a_j}(r(|\lambda{}|-a_j))\sqrt{\varphi(|\lambda{}|)}W(d\lambda{})
        \stackrel{d}{=} \int_{|\lambda{}|>a_j} g_{a_j}(r(|\lambda{}|-a_j))\sqrt{\varphi(|\lambda{}|)}dW(\lambda{})$$
$$+ \int_{|\lambda{}|<a_j} g_{a_j}(r(|\lambda{}|-a_j))\sqrt{\varphi(|\lambda{}|)}dW(\lambda{})=:K_1(r)+K_2(r).$$

First we deal with $K_1(r).$
By making the change of variables
$ u=\lambda{}\left(1-\frac{a_j}{|\lambda{}|}\right)$
and using results from \cite{olkly} and \cite{ole2} we get
     \begin{gather*}
      \begin{split}
         K_1(r) \stackrel{d}{=} &   \int_{\mathbb{R}^n} g_{a_j} (r|u|)\left(1+\frac{a_j}{|u|}\right)^{\frac{n-1}{2}} \\
          & \times                       \sqrt{\frac{h(|u|+a_j)}{\left(|u|+a_j\right)^{n-\alpha{}_0}}\prod_{i=1}^{k-1}\frac{1}{||u|-a_i+a_j|^{1-\alpha{}_i}}} d\widehat{W}(u),
    \end{split}
  \end{gather*}
where $\widehat{W}(\cdot)$ is the Wiener measure on $(\mathbb{R}^n, \mathfrak{B}^n).$

By  the  self-similarity of Gaussian white noise,
$d\widehat{W}(cx)\stackrel{d}{=}c^\frac{n}{2} d\widetilde{W}(x)\,,$ and making the change ov variables $\widetilde{u}=r u,$
we obtain
\begin{gather} \label{g15b1}  \begin{split} K_1(rt ) \stackrel{d}{=}& \int_{\mathbb{R}^n}  \frac{g_{a_j}(|\widetilde{u}|t)}{ \sqrt{r}|\widetilde{u}|^\frac {n-1}{2}}\left(a_j+ \frac{|\widetilde{u}|}{r}\right)^\frac{n-1}{2} \\&
\times \sqrt{\frac{h\left(\frac{|\widetilde{u}|}{r}+a_j\right)}{\left(\frac{|\widetilde{u}|}{r}+a_j  \right)^{n-\alpha_0}}
\prod_{i=1}^{k-1}\frac{1}{\left|\frac{|\widetilde{u}|}{r}-a_i+a_j\right|^{1-\alpha_i}}}\;d\widetilde{W}(\widetilde{u})
\end{split}
\end{gather}

Denote
   \begin{equation} \label{g16b}
       {Y}_{a_j,1}(t):=\left\{
               \begin{array}{ll}
{\displaystyle\int_{\mathbb{R}^n}\frac{g_{a_j}(|\widetilde{u}|t)}{|\widetilde{u}|^{\frac {n-\alpha_j}{2} }}d\widetilde{W}(\widetilde{u}),}
& \ t\in(0,1];\vspace{1mm} \\
                 0, &\ t=0\,.
               \end{array}
             \right.
   \end{equation}

By (\ref{g15b1}) and (\ref{g16b}),
\begin{gather}  \label{g17a} \begin{split}
      {R}_r(t)\,:=\,&\mathsf{E}\left( t^n \sqrt{\frac{V_{a_j} r^{\alpha{}_j}}{h(a_j)}}K_1(rt)-{t^n}{Y}_{a_j,1}(t)   \right)^2\\&
         = t^{2n}\int_{\mathbb{R}^n}\frac{g^2_{a_j}(|\widetilde{u}|t)}{|\widetilde{u}|^{n- \alpha{}_j}} {Q}_r(|\widetilde{u}|)d\widetilde{u},
\end{split}
\end{gather}
where
$${Q}_r(|\widetilde{u}|)\,:=\,\left(
\sqrt{\frac{V_{a_j} \cdot h\left(\frac{|\widetilde{u}|}{r} +a_j\right)}{h(a_j)\left(\frac{|\widetilde{u}|}{r} +a_j\right)^{1-\alpha_0}}
\prod_{ \scriptsize \begin{array}{l} i=1 \\  i\ne j \end{array}}^{k-1}\frac{1}{\left|\frac{|\widetilde{u}|}{r}-a_i+a_j\right|^{1-\alpha_i}}}
-1\right)^2.$$

Let us choose $1>\beta>\alpha_j,$
$$ B_1:=\{\widetilde{u}\in\mathbb{R}^n: |\widetilde{u}|\le r^{\beta}\} \quad \mbox{and}\quad
B_2:=\{\widetilde{u}\in\mathbb{R}^n: |\widetilde{u}|> r^{\beta}\}. $$

Then $\mathbb{R}^n=B_1\cup B_2$ and the integral on the right-hand side of (\ref{g17a}) becomes the sum $R_r(t)=R_{r,1}(t)+R_{r,2}(t).$

By $L_1$ and $L_2$-integrability of the function $f_{a_j,r}(\cdot)$ we obtain that  $g_{a_j}^2(|\lambda{}|)\in L_2(\mathbb{R}^n)$ and $g_{a_j}^2(|\lambda{}|)$ is bounded. Hence
the integral
$$ t^{2n}\int_{\mathbb{R}^n}\frac{g_{a_j}^2
(|\widetilde{u}|t)}{|\widetilde{u}|^{n- \alpha{}_j  }} d\widetilde{u}=t^{2n-\alpha{}_j}\int_{\mathbb{R}^n}\frac{g_{a_j}^2
(|\widetilde{z}|)}{|\widetilde{z}|^{n- \alpha{}_j}} d\widetilde{z},
$$
is uniformly bounded on $t\in[0,1].$
By {\rm(\ref{g5})}, given any $\varepsilon>0,$ there is  $r_0>0$ such that $Q_r(|\widetilde{u}|)<\varepsilon,$ when $r>r_0,\;\widetilde{u}\in B_1.$
 Thus
$R_{r,1}(t)$ can be made arbitrarily small  by decreasing the value of $\varepsilon.$

The decay rate of $g_{a_j}^2(\cdot)$ implies that there is $r^0>0 $ such that, for any $r>r^0,$
\begin{equation*}
 \begin{split}  {R}_{r,2}(t) & \le  2t^{2n} {\Huge\textrm{(}} \int_{B_2}\frac{g_{a_j}^2
(|\widetilde{u}|t)}{|\widetilde{u}|^{n- \alpha{}_j }}d\widetilde{u}
   +\int_{B_2}\frac{g_{a_j}^2
(|\widetilde{u}|t)}{|\widetilde{u}|^{n- \alpha{}_j }}
\frac{V_{a_j}}{h(a_j)} \frac{h\left(\frac{|\widetilde{u}|}{r}+a_j\right)}{\left( \frac{|\widetilde{u}|}{r}+a_j\right)^{1-\alpha_0}} \\
&\times
\prod_{ \scriptsize \begin{array}{l} i=1 \\  i\ne j \end{array}}^{k-1}\frac{1}{\left|\frac{|\widetilde{u}|}{r}-a_i+a_j\right|^{1-\alpha_i}}d\widetilde{u}  {\Huge\textrm{)}}
 \le C_1t^{2n-1} \int_{B_2}\frac{d\widetilde{u}}{|\widetilde{u}|^{n+1-\alpha_j}} \\ &
 +C_2 \frac{ t^{2n-1}}{r^{\beta-\alpha_j}}\int_{r^{\beta-1}}^{+\infty}\frac{h\left(\rho+a_j\right)}{\left(\rho+a_j\right)^{1-\alpha_0}}
\prod_{i=1}^{k-1} \frac{1}{\left|\rho-a_i+a_j\right|^{1-\alpha_i}}d\rho.
\end{split}
\end{equation*}

Now, by  the choice of $\beta>\alpha_j$ and $\varphi(|\lambda{}|)\in L_1(\mathbb{R}^n)$ the upper bound vanishes when   $r\to+\infty.\;\;$
Thus, $\lim_{r\to\infty}{R}_r(t)=0$    and the   finite dimensional distributions of the process $t^n\sqrt{{V_{a_j}r^{\alpha{}_j}}/{h(a_j)}}\,K_1(rt)$ converge to the finite dimensional distributions of the process $t^n\,{Y}_{a_j,1}(t),$ when $r \to +\infty. $

Similar to the case of $K_1(\cdot)$ by the change of variables $u=\lambda\left(\frac{a_j}{|\lambda|}-1\right)$  (see \cite{olkly, ole2}) we obtain

\begin{gather} \label{g15b}
\begin{split}  K_2(rt ) \stackrel{d}{=}& \int_{|\widetilde{u}|<r a_j}  \frac{g_{a_j}(|\widetilde{u}|t)}{ \sqrt{r}|\widetilde{u}|^\frac {n-1}{2}}
\left(a_j- \frac{|\widetilde{u}|}{r}\right)^\frac{n-1}{2}  \\& \times \sqrt{\frac{h\left(a_j-\frac{|\widetilde{u}|}{r}\right)}{\left(a_j-\frac{|\widetilde{u}|}{r}  \right)^{n-\alpha_0}}
\prod_{i=1}^{k-1}\frac{1}{\left|a_j-\frac{|\widetilde{u}|}{r}-a_i\right|^{1-\alpha_i}}}\;d\overline{W}(\widetilde{u}),
\end{split}
\end{gather}
where $\overline{W}(\cdot)$ is a Wiener measure on $(\mathbb{R}^n, \mathfrak{B}^n).$

Denote
   \begin{equation} \label{g16b-}
       Y_{a_j,2}(t):=\left\{
               \begin{array}{ll}
{\displaystyle \int_{\mathbb{R}^n}\frac{g_{a_j}(|\widetilde{u}|t)}{|\widetilde{u}|^{\frac {n-\alpha{}_j}{2} }}d\overline{W}(\widetilde{u}),}
& \ t\in(0,1];\vspace{1mm} \\
                 0, &\ t=0\,.
               \end{array}
             \right.
   \end{equation}
By (\ref{g15b}) and (\ref{g16b-}), we obtain
$${S}_r(t):=\mathsf{E}\left( t^{n} \sqrt{\frac{V_{a_j}{}r^{\alpha{}_j}}{h(a_j)}}K_2(rt)-t^{n}{Y}_{a_j,2}(t)   \right)^2$$
\begin{equation} \label{g17b-} = t^{2n}\int_{\mathbb{R}^n}\frac{g^2_{a_j}(|\widetilde{u}|t)}{|\widetilde{u}|^{n- \alpha{}_j}} \overline{Q}_r(|\widetilde{u}|)d\widetilde{u},
\end{equation}
where
$$\overline{Q}_r(|\widetilde{u}|):=
\left\{
               \begin{array}{ll}
\left(\sqrt{\frac{h\left(a_j-\frac{|\widetilde{u}|}{r}\right)}{h(a_j)}
{\displaystyle\prod_{ \scriptsize \begin{array}{l} i=0 \\  i\ne j \end{array}}^{k-1}}\frac{V_{a_j}}{\left|a_j-\frac{|\widetilde{u}|}{r}-a_i\right|^{1-\alpha_i}}}
-1\right)^2,\; & \mbox{if}\;\; a_j r>|\widetilde{u}|, \\
         0,\; & \mbox{if} \;\;a_j r\le |\widetilde{u}|.
   \end{array}%
           \right.
$$

The integral on the right-hand side of (\ref{g17b-}) can be split into two parts
${S}_r(t)={S}_{r,1}(t)+{S}_{r,2}(t),$ where the
integration sets are respectively $B_1$ and $B_2.$    Similar to the case of $R_{r,1}(t)$  the integral
$S_{r,1}(t)$  can be made arbitrarily small.

By (\ref{g}) there is $r^0>0,$ such that, for any $r>r^0,$
 $$  {S}_{r,2}(t) \le  C_1 t^{2n-1} \int_{B_2}\frac{d\widetilde{u}}{|\widetilde{u}|^{n+ 1 - \alpha{}_j }}    +C_2 \frac{ t^{2n-1}}{r^{\beta-\alpha_j}}$$ $$ \times \int_{r^{\beta-1}}^{a_j}\frac{h(a_j-\rho)}{(a_j-\rho)^{1-\alpha_0}}
\prod_{i=1}^{k-1} \frac{1}{\left|a_j-\rho-a_i\right|^{1-\alpha_i}}d\rho,
$$
and the upper bound vanishes when $r\to+\infty.$

Therefore, \,the \, finite\,  dimensional\, distributions of \, the\,  process  \;$t^n\sqrt{{V_{a_j}r^{\alpha{}_j}}/{h(a_j)}}\,K_2(rt)$\
converge to the finite dimensional distributions of the process ${t^n}\,Y_{a_j,2}(t),\; t\in[0,1],$ when $r\to+\infty.$

The above results imply the convergence of the finite dimensional distributions of the process
$$ \sqrt{2}\, X_{r,a_j}(t)\stackrel{d}{=} t^n\sqrt{\frac{V_{a_j}r^{\alpha{}_j}}{h(a_j)}}\cdot\Big(K_1(rt)+K_2(rt)\Big) $$
to the finite dimensional distributions of ${t^n}  ( Y_{a_j,1}(t)+\,Y_{a_j,2}(t)), \;\;t\in[0,1].$

Finally,  since the Winer measures $\widetilde{W}(\cdot)$ and $\overline{W}(\cdot)$ are independent, it follows that there exists the Winer measure $Z(\cdot) $ on $(\mathbb{R}^n, \mathfrak{B}^n)$
such that, formally, $ \sqrt{2}dZ(\cdot)\stackrel{d}{=} d\widetilde{W}(\cdot)+d\overline{W}(\cdot ),$
and
\begin{eqnarray*}
       {t^n} ( Y_{a_j,1}(t)+ Y_{a_j,2}(t)) \stackrel{d}{=}
      \sqrt{2} t^n \int_{\mathbb{R}^n}\frac{g_{a_j}(|\widetilde{u}|t)}{|\widetilde{u}|^{\frac {n-\alpha{}_j} 2}}dZ(\widetilde{u})=\sqrt{2}\,X_{a_j}(t).
\end{eqnarray*}
\end{proof}

\section{Degenerated limits}

Theorem \ref{a2}  gives the limit behaviour of the functional $X_{r,a_j}(t)$ of the random field $\xi(x)$ with unbounded spectral density at $a_j.$
To complete this research we investigate those cases where the random field $\xi(x)$ in the integral $X_{r,a}(t)$ does not have singularities at $a.$

\begin{theorem}  \label{a3}
  Let the isotropic spectral density $\varphi(|\lambda{}|)$ be of the form \eqref{g5} and $a\notin \{a_0,\,...\,,a_{k-1}\}.$  Then, for any normalization ${1}/{r^{n-\alpha/2}}, \, 0<\alpha<1,$ the finite dimensional distributions of the process $X_{r,a}(t)$ converge to $0,$ when $r\to +\infty.$
\end{theorem}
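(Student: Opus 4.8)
The plan is to reduce the statement to a single second-moment estimate and then bound the resulting spectral integral. Since the limiting process is the deterministic one, $0$, and $X_{r,a}(\cdot)$ is a centred Gaussian process (a linear functional of the Gaussian field $\xi$), convergence of the finite-dimensional distributions is equivalent to convergence of the covariance matrix of $(X_{r,a}(t_1),\dots,X_{r,a}(t_m))$ to the zero matrix; by the Cauchy--Schwarz inequality the off-diagonal entries are dominated by the variances, so it is enough to show that $\mathsf{E}\,X_{r,a}^2(t)\to 0$ for each fixed $t\in(0,1]$ (the case $t=0$ is trivial). The first step is then the computation from Theorem~\ref{a2}: with $I_a(r):=r^{-n}\int_{\mathbb{R}^n}f_{a,r}(x)\xi(x)\,dx$ one has $X_{r,a}(t)=t^{n}r^{\alpha/2}I_a(rt)$ (a fixed constant factor in the normalisation of $X_{r,a}$, if present, is immaterial), while the spectral representation of $\xi$ and the stochastic Plancherel formula yield, as in \eqref{g12j}, $I_a(rt)\stackrel{d}{=}\int_{\mathbb{R}^n}g_a\!\big(rt(|\lambda|-a)\big)\sqrt{\varphi(|\lambda|)}\,dW(\lambda)$; therefore
\begin{equation}\label{degvar}
\mathsf{E}\,X_{r,a}^2(t)=t^{2n}r^{\alpha}\int_{\mathbb{R}^n}g_a^2\!\big(rt(|\lambda|-a)\big)\varphi(|\lambda|)\,d\lambda
=C\,t^{2n}r^{\alpha}\int_0^{\infty}\rho^{n-1}g_a^2\!\big(rt(\rho-a)\big)\varphi(\rho)\,d\rho ,
\end{equation}
the last equality after passing to polar coordinates. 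The theorem then amounts to showing that the integral in \eqref{degvar} is $o(r^{-\alpha})$.

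To bound it, I would split the $\rho$-integral according to the distance from $a$. Because $a\notin\{a_0,\dots,a_{k-1}\}$ and $a_0=0$, one can fix $\delta>0$ so small that $[a-\delta,a+\delta]\subset(0,\infty)$ contains none of the points $a_0,\dots,a_{k-1}$; on this interval the product in the denominator of \eqref{g5} is bounded away from $0$ and $h$ is bounded, so $M:=\sup_{[a-\delta,a+\delta]}\varphi<\infty$. On the inner part $|\rho-a|\le\delta$ substitute $s=rt(\rho-a)$ and use that $g_a$ is bounded (from $f_{a,r}\in L_1$) together with \eqref{g}:
$$\int_{a-\delta}^{a+\delta}\rho^{n-1}g_a^2\!\big(rt(\rho-a)\big)\varphi(\rho)\,d\rho\ \le\ \frac{M(a+\delta)^{n-1}}{rt}\int_{-rt\delta}^{rt\delta}g_a^2(s)\,ds\ =\ O\!\Big(\frac{\ln r}{r}\Big),$$
since $g_a^2(s)\le\min(C,C/|s|)$, so that $\int_{-L}^{L}g_a^2(s)\,ds=O(\ln L)$. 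On the outer part $|\rho-a|\ge\delta$ one has $|rt(\rho-a)|\ge rt\delta$, so \eqref{g} gives $g_a^2(rt(\rho-a))\le C/(rt\delta)$, and with $\varphi(|\lambda|)\in L_1(\mathbb{R}^n)$,
$$\int_{\{\rho>0:\,|\rho-a|\ge\delta\}}\rho^{n-1}g_a^2\!\big(rt(\rho-a)\big)\varphi(\rho)\,d\rho\ \le\ \frac{C}{rt\delta}\int_0^{\infty}\rho^{n-1}\varphi(\rho)\,d\rho\ =\ O\!\Big(\frac1r\Big).$$
Feeding both estimates into \eqref{degvar} yields $\mathsf{E}\,X_{r,a}^2(t)\le C\,r^{\alpha-1}(1+\ln r)$ (the constant depending only on the fixed $t$ and $\delta$), which tends to $0$ as $r\to\infty$ because $0<\alpha<1$; this gives the claimed convergence of the finite-dimensional distributions to $0$.

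The only genuinely delicate point, and the step I expect to be the main obstacle, is the region near $\rho=a$. There the decay hypothesis \eqref{g} is of no use — it controls $g_a$ only away from its centre, and near $\rho=a$ neither $g_a^2$ nor $\varphi$ need be small — so the coarse estimate that disposes of the outer region fails, and one must instead rely on $\varphi$ being \emph{bounded} in a neighbourhood of $a$, which is exactly what the hypothesis $a\notin\{a_0,\dots,a_{k-1}\}$ provides. The reward is merely an $O(r^{-1}\ln r)$ contribution from that region, as opposed to the $O(r^{-\alpha})$ one would obtain at a true $\alpha$-type singularity; it is precisely this discrepancy, together with $\alpha<1$, that makes the normalisation $r^{n-\alpha/2}$ too strong and forces the limit to be degenerate. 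Everything else — the spectral/Plancherel computation, the polar change of variables, and the tail bound — is routine and parallels the proof of Theorem~\ref{a2}.
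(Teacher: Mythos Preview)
Your argument is correct. The reduction to a single second-moment estimate via Gaussianity and Cauchy--Schwarz, the Plancherel computation in \eqref{degvar}, and the two bounds (inner and outer) are all sound; the resulting $O(r^{\alpha-1}\ln r)$ estimate indeed gives the degenerate limit.

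The paper takes a somewhat different route. Mirroring the proof of Theorem~\ref{a2}, it first splits the spectral integral into the pieces $K_1$ (over $|\lambda|>a$) and $K_2$ (over $|\lambda|<a$), performs the nonlinear change of variables $u=\lambda(1-a/|\lambda|)$ (resp.\ $u=\lambda(a/|\lambda|-1)$), rescales $\widetilde u=ru$, and then splits each resulting integral at the moving radius $|\widetilde u|=r^{\beta}$ with $\alpha<\beta<1$. Your decomposition is instead carried out directly in the original spectral variable $\rho=|\lambda|$ with a \emph{fixed} neighbourhood $|\rho-a|\le\delta$. Both yield essentially the same bound $r^{\alpha-1}\ln r$. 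The paper's approach has the virtue of structural parallelism with Theorem~\ref{a2}, so one sees precisely where the absence of a singularity at $a$ changes the outcome; the $K_1/K_2$ split and the scale-$r$ change of variables, however, are only really needed when one must identify a nondegenerate limit, and are superfluous for a pure variance bound. Your approach is shorter and more transparent: it isolates the two ingredients---boundedness of $\varphi$ near $a$ (from $a\notin\{a_0,\dots,a_{k-1}\}$) and the decay \eqref{g} away from $a$---and combines them without any intermediate machinery.
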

\begin{proof}
By the proof of  Theorem~\ref{a2} it is enough to show that
$$    \lim_{r \to \infty} r^{\alpha }t^{2n} \mathsf{E}\left[ K^2_1(rt) \right] =   \lim_{r \to \infty} r^{\alpha}t^{2n} \mathsf{E}\left[ K^2_2(rt) \right] =0. $$
It is easy to see that
$$ \widetilde{R}_r(t):=r^{\alpha}t^{2n} \mathsf{E}\left[ K^2_1(rt) \right] =  \frac{t^{2n}}{r^{1-\alpha}}\int_{\mathbb{R}^n}\frac{g^2_a(|\widetilde{u}|t)}{|\widetilde{u}|^{n- 1}} {\widetilde{Q}}_r(|\widetilde{u}|)d\widetilde{u},  $$
where
$$\widetilde{Q}_r(|\widetilde{u}|)\,:=\,
\frac{h\left(\frac{|\widetilde{u}|}{r} +a\right)}{\left(\frac{|\widetilde{u}|}{r} +a\right)^{1-\alpha_0}}
\prod_{  i=1}^{k-1}\frac{1}{\left|\frac{|\widetilde{u}|}{r}-a_i+a\right|^{1-\alpha_i}}.
$$
Using $B_1$ and $B_2$ from Theorem \ref{a2} and $1>\beta>\alpha$ we split $\widetilde{R}_r(t)$
into the sum $\widetilde{R}_r(t)=\widetilde{R}_{r,1}(t)+\widetilde{R}_{r,2}(t).$  There is $\widetilde{r}_0>0$  such that $|\widetilde{Q}_r(|\widetilde{u}|)|<C,$
where $r>\widetilde{r}_0,\, \widetilde{u}\in B_1. $ Using boundness of $g_{a}^2(|\lambda|)$ and  \eqref{g} we obtain
\begin{gather*}  \begin{split} \widetilde{R}_{r,1}(t) & \; \le \frac{t^{2n}}{r^{1-\alpha}}\left(\int_{t|\widetilde{u}|<1}\frac{C}{|\widetilde{u}|^{n- 1}} d\widetilde{u}+C_1\int_{1\le t|\widetilde{u}|\le t r^{\beta}}
\frac{g^2_a(|\widetilde{u}|t)}{|\widetilde{u}|^{n- 1}} d\widetilde{u} \right)\\
& \;\; = \frac{C\, t^{2n-1}}{r^{1-\alpha}} + \frac{C_2 t^{2n-1}}{r^{1-\alpha}}\int_{t^{-1}\le |\widetilde{u}|\le  r^{\beta}}
\frac{d\widetilde{u}}{|\widetilde{u}|^{n}} =  \frac{C \,t^{2n-1}}{r^{1-\alpha}}  \\
& \;\;\; +\frac{C_2 t^{2n-1}}{r^{1-\alpha}}\left(ln\left(r^{\beta}\right)+ln(t)\right)  \to 0 \, ,
\end{split}
\end{gather*}
when $r\to+\infty.$

By  \eqref{g}  one can obtain that there is $\widetilde{r}^0>0$  such that, for any  $r>\widetilde{r}^0:$
$$ \widetilde{R}_{r,2}(t)\le \frac{C\, t^{2n-1}}{r^{\beta-\alpha}}\int_{r^{\beta-1}}^{+\infty}\frac{h\left(\rho+a\right)}{\left(\rho+a\right)^{1-\alpha_0}}
\prod_{ i=1 }^{k-1} \frac{1}{\left|\rho-a_i+a\right|^{1-\alpha_i}}d\rho.
$$

Now, because of the choice $\beta>\alpha$ we get that the upper bound vanishes when $r\to+\infty. $
Thus $\lim_{r\to \infty}\widetilde{R}_r(t)=0. $

Similar to \eqref{g17b-}  we obtain

\begin{equation*} \begin{split} &
      {\widetilde{S}}_r(t):=r^{\alpha}t^{2n}\mathsf{E}\left(K^2_2(rt)\right)=\frac{t^{2n}}{r^{1-\alpha}}
         \int_{\mathbb{R}^n}\frac{g^2_a(|\widetilde{u}|t)}{|\widetilde{u}|^{n- 1}} \hat{Q}_r(|\widetilde{u}|)d\widetilde{u},
\end{split}
\end{equation*}
where
$$\hat{Q}_r(|\widetilde{u}|):=
\left\{
               \begin{array}{ll}
\frac{h\left(a-\frac{|\widetilde{u}|}{r}\right)}{\left(a-\frac{|\widetilde{u}|}{r}\right)^{1-\alpha_0}}
{\displaystyle\prod_{  i=1 }^{k-1}}\frac{1}{\left|a-\frac{|\widetilde{u}|}{r}-a_i\right|^{1-\alpha_i}}.
,\; & \mbox{if}\;\; a r>|\widetilde{u}|, \\
         0,\; & \mbox{if} \;\;a r\le |\widetilde{u}|.
   \end{array}%
           \right.
$$

We split the integral $\widetilde{S}_r(t)$  into two parts
$\widetilde{S}_r(t)=\widetilde{S}_{r,1}(t)+\widetilde{S}_{r,2}(t),$ where the
integration sets are respectively $B_1$ and $B_2.$    Similar to the case of $\tilde{R}_{r,1}(t)$  the integral
$\widetilde{S}_{r,1}(t)$  can be made arbitrarily small  by increasing the value of  $r.$

By \eqref{g} there is $\widetilde{r}^0>0,$  such that, for any  $r>\widetilde{r}^0,$
\begin{gather*}  \begin{split}
     & \widetilde{S}_{r,2}(t)\le\frac{C t^{2n-1}}{r^{\beta-\alpha}}\int_{r^{\beta-1}}^{\alpha}
     \frac{h\left(a-\rho\right)}{\left(a-\rho\right)^{1-\alpha_0}}
\prod_{ i=1 }^{k-1} \frac{1}{\left|a-\rho-a_i\right|^{1-\alpha_i}}d\rho.
\end{split}
\end{gather*}
The upper bound vanishes when $r\to +\infty,$  thus, $\lim_{r\to+\infty}\widetilde{S}_{r}(t)=0,$ which completes the proof.

\end{proof}

\begin{remark}
    If in Theorem \ref{a3} $\alpha=1$ then, similar to the one-dimen\-sional case, we would expect that the central limit theorem holds.
\end{remark}

\section{Final remarks }

1.  A similar limit theorem for cyclical long-range dependent random fields was proved in \cite{ole2}. Namely,  an additive class of singular spectral densities was investigated  instead of \eqref{g5}. It is easy to show that the additive class of spectral densities does not coincide with the multiplicative class considered in this paper.  Different normalization constants in Theorem~\ref{a2} and the decay condition \eqref{g} were investigated in this paper.  Degenerated limits were not studied in \cite{ole2}.\\
2.  Similar results for a specific choice of weight functions and additive spectral densities were obtained in \cite{gir1}. Namely,
the discrete case and the functionals
$$ X_{N,\beta}(t)=\frac{1}{A_N}\sum_{m=0}^{[Nt]}e^{-im\beta}H_k(\xi_m) $$  with the particular weight functions
$$ f_{\beta, Nt}(m) = \left\{
\begin{array}{ll} e^{-i m \beta}, &  0\le m\le [Nt]; \\
0, & \textrm{otherwise} \\
\end{array}\right.
$$
were investigated.\\
3.  A particular example of random fields with only singularity at the frequency $a_1\not= 0$ and the specific weight function
$f_{a_1,r}(\cdot)$ with the Fourier transform
$$g_{a_1}(r(|\lambda{}|-a_1))=(2\pi)^{n/2}\frac{J_{\frac{n}{2}}(r(|\lambda{}|-a_1))}{(r(|\lambda{}|-a_1))^{n/2}}$$
was studied in \cite{ole1} and \cite{olkly}.\\
4.  One can easily obtain new examples of the weight functions $f_{a,r}(x)$ and $g_{a}(r(|\lambda|-a)),$  by using the representation of the Fourier transform of radial functions
\cite{leo1}
\begin{equation*}f_{a,r}(x)=\frac{1}{(2\pi)^{n/2}|x|^{\frac{n}{2}-1}}\int_0^\infty\mu^{n/2}
g_{a}(r(\mu-a))J_{\frac{n}{2}-1}(|x|\mu)d\mu,
\end{equation*}
and tables of the Hankel transforms.\\
5. It would be of interest to prove limit theorems for functionals of the form
       $$\int_{\mathbb{R}^n}f_{a_j,rt}(x)H_m(\xi(x))dx,\, m>1,$$
where $\xi(x)$ exhibits cyclical long-range dependence.\\

\bigskip

CONTACT INFORMATION

\medskip
B.~Klykavka\\ Department of Mathematical Analysis and Probability Theory, National Technical University of Ukraine «KPI», 03056 Kyiv, Ukraine  \\ bklykavka@yahoo.com

\medskip
A.~Olenko \\ Department of Mathematics and Statistics, La Trobe University,\\ Victoria, 3086, Australia \\ A.Olenko@latrobe.edu.au

\medskip
M.~Vicendese\\ Department of Mathematics and Statistics, La Trobe University,\\ Victoria, 3086, Australia \\ fishardansin@gmail.com

\end{document}